\numberwithin{equation}{section}
\newtheorem{theorem}{Theorem}[section]
\newtheorem{lemma}[theorem]{Lemma}
\newtheorem{corollary}[theorem]{Corollary}
\theoremstyle{definition}
\newtheorem{remark}[theorem]{Remark}
\newtheorem{definition}[theorem]{Definition}
\theoremstyle{remark}
\newcommand{\cA}{{\mathcal A}}
\newcommand{\cE}{{\mathcal E}}
\newcommand{\cF}{{\mathcal F}}
\newcommand{\cG}{{\mathcal G}}
\newcommand{\cL}{{\mathcal L}}
\newcommand{\cP}{{\mathcal P}}
\newcommand{\cW}{{\mathcal W}}
\newcommand{\N}{{\mathbb N}}
\newcommand{\R}{{\mathbb R}}
\newcommand{\Z}{{\mathbb Z}}
\def\al{\alpha}
\def\kp{\kappa}
\def\la{\lambda}
\def\sg{\sigma}
\def\l{\left}
\def\r{\right}
\def\ds{\displaystyle}
\def\ol{\overline}
\begin{document}

\title
[Positive operators and maximal operators in a filtered measure space]
{Positive operators and maximal operators in a filtered measure space}
\author[H.~Tanaka]{Hitoshi Tanaka}
\address{Graduate School of Mathematical Sciences, The University of Tokyo, Tokyo, 153-8914, Japan}
\email{htanaka@ms.u-tokyo.ac.jp}
\thanks{
The first author is supported by 
the Global COE program at Graduate School of Mathematical Sciences, the University of Tokyo, 
Grant-in-Aid for Scientific Research (C) (No.~23540187), 
the Japan Society for the Promotion of Science, 
and was supported by F\=ujyukai foundation.
}
\author[Y.~Terasawa]{Yutaka Terasawa}
\address{Graduate School of Mathematical Sciences, The University of Tokyo, Tokyo, 153-8914, Japan}
\email{yutaka@ms.u-tokyo.ac.jp}
\thanks{
The second author is a Research Fellow of the Japan Society for the Promotion of Science.
}
\subjclass[2010]{42B25, 60G46 (primary), 60G40, 60G42 (secondary).}
\keywords{
$A_p$-weight;
$A_{\infty}$-weight;
conditional expectation;
positive operator;
discrete Wolff's potential;
filtered measure space;
martingale;
two-weight norm inequality;
Sawyer-type condition;
the Carleson embedding theorem;
trace inequality.
}
\date{}

\begin{abstract}
In a filtered measure space, 
a characterization of weights for which the trace inequality of a positive operator holds is given
by the use of discrete Wolff's potential. 
A refinement of the Carleson embedding theorem is also introduced. 
Sawyer type characterization of weights 
for which a two-weight norm inequality for a 
generalized Doob's maximal operator holds 
is established by an application of our Carleson embedding theorem. 
Moreover, 
Hyt\"{o}nen-P\'{e}rez type
one-weight norm estimate for Doob's maximal operator 
is obtained by the use of our two-weight characterization. 
\end{abstract}

\maketitle

\section{Introduction}\label{sec1}
Weighted Norm Inequalities in Harmonic analysis is an old subject whose systematic 
investigation was initiated by \cite{Mu}, \cite{CoFe} and \cite{MuWh} etc.. A classical reference in the field is \cite{GaRu}.

Dyadic Harmonic Analysis has recently
acquired a renewed attention because of its wide applicability to Classical Harmonic Analysis,
including weighted norm inequalities. 
Petermichl \cite{Pe} and Nazarov-Treil-Volberg \cite{NaTrVo} 
were cornerstone works, 
whose investigations have been continued by many authors. 
This subject is also old, 
which can be found in \cite{Sa1} and \cite{GaJo} etc..  
For more complete references, 
we refer to the bibliographies of \cite{NaTrVo} and \cite{La}.

Two of the important topics in the intersection of these subjects are 
to get sharp one-weight estimates of usual operators in Classical Harmonic Analysis and 
to get necessary and sufficient conditions of weights for the boundedness of those operators in the two-weight setting.
Interestingly, these two topics are closely related. 
One way to attack these problems is a dyadic discretization technique. 
For the first problem, 
one of the important steps of a solution is 
getting a sharp one-weight estimate for 
a dyadic discretization of a singular integral operator, 
i.e., a generalized Haar shift operator. 
A sharp one-weight estimate of general singular integral operators, 
i.e., the $A_2$-conjecture, 
which has been an open problem in this field for a long time, 
was settled by Hyt\"{o}nen \cite{Hy3} along this line 
and its simpler proofs were found by several authors 
(cf. \cite{HyLaPe,Le2} etc.).
For (linear) positive operators, 
one example of which is a fractional integral operator, 
investigations along this line was done by several authors 
\cite{KeSa,SaWh,SaWhZh,VeWh,CaOrVe1,CaOrVe2}
and more recently by
\cite{LaMoPeTo,LaSaUr,Ka1,Ka2,Tr}. 
For the Hardy-Littlewood maximal operator (including a fractional maximal operator), 
Sawyer \cite{Sa1} got a two-weight characterization
by considering the dyadic Hardy-Littlewood (fractional) maximal operator.
Recently, 
using similar techniques, 
the sharp weighted estimates of the Hardy-Littlewood (fractional) maximal operator is established 
in the works 
\cite{Le1,LaMoPeTo,HyKa,HyPe},
which are continuations of the work of Buckley \cite{Bu}. 
For a survey of these developments, we refer to \cite{Per}, \cite{Hy4} and \cite{Hy5}. 

On the other hand, 
Martingale Harmonic Analysis is a subject which has also been well studied. 
Doob's maximal operator, 
which is a generalization of the dyadic Hardy-Littlewood maximal operator, 
and a martingale transform, 
which is an analogue of a singular integral in Classical Harmonic Analysis, 
are important tools in stochastic analysis. 
This field is called Martingale Harmonic Analysis and 
is well explained in the books 
by Dellacherie and Meyer \cite{DeMe}, 
Long \cite{Lo} and Kazamaki \cite{Ka}.
For Doob's maximal operator, 
one-weight estimate was studied first by Izumisawa and Kazamaki \cite{IzKa}, 
assuming some regularity condition on $A_p$ weights.
Later, Jawerth \cite{Ja} found that the added property is superfluous 
(see Remark \ref{rem4.6} below). 
For two-weight norm inequalities, 
the first study is done by Uchiyama \cite{Uc}, 
concerning necessary and sufficient condition of weights for weak type $(p,p)$ inequalities to hold.
Concerning strong $(p,q)$ type inequalities, 
Long and Peng \cite{LoPe} 
found necessary and sufficient conditions for weights, 
which is the analogous to Sawyer's condition for the boundedness of the Hardy-Littlewood maximal operator. 
There is also a recent work by Chen and Liu \cite{ChLi} on this topic.
For positive operators, 
there seems no work done in a filtered probability space or in a filtered measure space 
and we shall try to generalize 
the results in the Euclidean space of the weighted estimate for dyadic positive operators 
to those in a martingale setting.
(For fractional integral operators in a martingale setting, there is a recent work by Nakai and Sadasue \cite{NaSa}.)

The study of a boundedness property of positive operators and maximal operators 
is closely related to the Carleson embedding (or measure) theorem, 
which is a martingale analogue of 
the Carleson embedding theorem of a Hardy space into a weighted Lebesgue space.
In the dyadic setting in the Euclidean space, 
this coincides with the Dyadic Carleson embedding theorem.
The Carleson measure in a continuously filtered probability space was first introduced by Arai \cite{Ar}
with an application to the corona theorem on Complex Brownian Spaces.  
This was rediscovered later by Long \cite{Lo} in a discrete case, 
with an application to a characterization of $BMO$ martingales. 

Since a dyadic martingale is a special martingale in many ways, 
it might be useful to see which part of the theory of Dyadic Harmonic Analysis 
can be generalized to that of Martingale Harmonic Analysis, 
and which part is special to Dyadic Harmonic Analysis. 
Our contributions can be regarded as such an attempt.
We also expect that such results have some applications to 
stochastic analysis and analysis on metric spaces.

The purpose of this paper is to develop a theory of weights for 
positive operators and 
generalized Doob's maximal operators 
in a filtered measure space. 
Martingale Harmonic Analysis in a filtered (infinite) measure space is treated in 
\cite{St,Sc,Hy2,Ke,HyKe}.
In this contribution, 
we generalize the results of dyadic positive operators in the Euclidean space 
\cite{KeSa,SaWh,CaOrVe1,CaOrVe2}
to a filtered measure space.
The generalization of the results in 
\cite{LaSaUr} or \cite{Tr} 
to our setting seems difficult, 
since they use arguments related to an inclusion of cubes extensively.
We also investigate a necessary and sufficient condition of weights for a 
two-weight norm inequality of generalized Doob's maximal operator in a filtered measure space
which are generalization of both dyadic Hardy-Littlewood maximal operator and dyadic
fractional maximal operator. 
To state our main theorem, let us introduce some notations and terminologies, 
most of which are standard (cf. \cite{Hy2}). 

Let a triplet $(\Omega,\cF,\mu)$ be a measure space. 
Denote by $\cF^0$ the collection of sets in $\cF$ with finite measure. 
The measure space $(\Omega,\cF,\mu)$ is called $\sg$-finite 
if there exist sets $E_i\in\cF^0$ such that
$\bigcup_{i=0}^{\infty}E_i=\Omega$.
In this paper all measure spaces are assumed to be $\sg$-finite. 
Let $\cA\subset\cF^0$ be an arbitrary subset of $\cF^0$. 
An $\cF$-measurable function 
$f:\,\Omega\to\R$ 
is called $\cA$-integrable if 
it is integrable on all sets of $\cA$, 
i.e., 
$$
1_{E}f\in L^1(\cF,\mu)
\text{ for all }
E\in\cA.
$$
Denote the collection of all such functions by
$L_{\cA}^1(\cF,\mu)$.

If $\cG\subset\cF$ is another $\sg$-algebra, 
it is called a sub-$\sg$-algebra of $\cF$. 
A function 
$g\in L_{\cG^0}^1(\cG,\mu)$ 
is called the conditional expectation of 
$f\in L_{\cG^0}^1(\cF,\mu)$ 
with respect to $\cG$ if there holds
$$
\int_{G}f\,d\mu=\int_{G}g\,d\mu
\text{ for all }
G\in\cG^0.
$$
The conditional expectation of $f$ with respect to $\cG$ 
will be denoted by $E[f|\cG]$, 
which exists uniquely in 
$L_{\cG^0}^1(\cG,\mu)$ 
due to $\sg$-finiteness of $(\Omega,\cG,\mu)$.

A family of sub-$\sg$-algebras 
$(\cF_i)_{i\in\Z}$ 
is called a filtration of $\cF$ if 
$\cF_i\subset\cF_j\subset\cF$ 
whenever $i,j\in\Z$ and $i<j$. 
We call a quadruplet 
$(\Omega,\cF,\mu;(\cF_i)_{i\in\Z})$ 
a $\sg$-finite filtered measure space.
We write 
$$
\cL
:=
\bigcap_{i\in\Z}L_{\cF_i^0}^1(\cF,\mu).
$$
Notice that 
$L_{\cF_i^0}^1(\cF,\mu)
\supset
L_{\cF_j^0}^1(\cF,\mu)$ 
whenever $i<j$. 
For a function $f\in\cL$ 
we will denote $E[f|\cF_i]$ by $\cE_if$. 
By the tower rule of conditional expectations, 
a family of functions
$\cE_if\in L_{\cF_i^0}^1(\cF_i,\mu)$ 
becomes a martingale. (see Definition \ref{def2.1} below). 

By a weight we mean a nonnegative function 
which belongs to $\cL$ and, 
by a convention, 
we will denote the set of all weights by $\cL^{+}$. 

Let $\al_i$, $i\in\Z$, be a 
nonnegative bounded $\cF_i$-measurable function 
and set $\al=(\al_i)$. 
For a function $f\in\cL$ 
we define a positive operator $T_{\al}$ by 
$$
T_{\al}f:=\sum_{i\in\Z}\al_i\cE_if,
$$
and, define a generalized Doob's maximal operator $M_{\al}$ by 
$$
M_{\al}f:=\sup_{i\in\Z}\al_i|\cE_if|.
$$
When $\al=(1_{\Omega})$ this is Doob's maximal operator and 
we will write then $M_{\al}f=:f^{*}$. 

In this paper we shall first investigate the characterization of the weight 
$w\in\cL^{+}$ for which 
the trace inequality for the discrete positive operator $T_{\al}$ 
\begin{equation}\label{1.1}
\|T_{\al}f\|_{L^q(wd\mu)}
\le C_{\al,w}
\|f\|_{L^p(d\mu)}
\end{equation}
holds with $0<q<\infty$ and $1<p<\infty$.

In order to guess what the sufficient condition for \eqref{1.1} to hold is,
we argue heuristically in the following. 
We now assume that the inequality \eqref{1.1} holds 
for $1<p\le q<\infty$. Then, 
since the conditional expectation operators are selfadjoint, 
by duality there holds 
\begin{equation}\label{1.2}
\|T_{\al}(gw)\|_{L^{p'}(d\mu)}
\le C
\|g\|_{L^{q'}(wd\mu)},
\end{equation}
where $p'=\ds\frac{p}{p-1}$ is the conjugate exponent number of $p$. 
Following a principle of the weight theory, due to Sawyer \cite{Sa2}, 
to verify \eqref{1.1} it might suffice only to test 
\eqref{1.1} and \eqref{1.2} over 
the characteristic functions $1_{E}$. 
More precisely, one can expect that, 
the condition that 
\begin{equation}\label{1.3}
\l(\int_{E}\l(\sum_{j\ge i}\al_j\r)^qw\,d\mu\r)^{\frac1q}
\le C
\mu(E)^{\frac1p}
\end{equation}
and
\begin{equation}\label{1.4}
\l(\int_{E}\l(\sum_{j\ge i}\al_j\cE_jw\r)^{p'}\,d\mu\r)^{\frac1{p'}}
\le C
[wd\mu](E)^{\frac1{q'}}
\end{equation}
for any $E\in\cF_i^0$, $i\in\Z$, 
is sufficient for the inequality \eqref{1.2} to hold. 
This fact was verified for
positive operators associated the dyadic lattices in $\R^n$ 
\cite{LaSaUr} (and also \cite{Tr}). 

For some technical reasons, instead of the condition \eqref{1.3}, 
we must postulate the following strong condition \eqref{1.5} and then 
we shall prove that the condition \eqref{1.4} is sufficient for the inequality \eqref{1.2} to hold 
(cf. \cite{KeSa,SaWh} in the Euclidean space case).

\begin{quote}
The function $\al_i$, $i\in\Z$, 
is a nonnegative bounded $\cF_i$-measurable and 
$\ol{\al}_i\in\cL^{+}$, 
where 
$\ds\ol{\al}_i:=\sum_{j\ge i}\al_j$.
Moreover, 
\begin{equation}\label{1.5}
\cE_i\ol{\al}_i\approx\ol{\al}_i,
\end{equation}
holds.
\end{quote}

\begin{theorem}\label{thm1.1}
Let $1<p\le q<\infty$, 
$\al$ satisfy the condition {\rm\eqref{1.5}} 
and $w\in\cL^{+}$ be a weight. Then 
the following statements are equivalent:

\begin{enumerate}
\item[{\rm(a)}] 
There exists a constant $C_1>0$ such that 
$$
\|T_{\al}f\|_{L^q(wd\mu)}
\le C_1
\|f\|_{L^p(d\mu)};
$$
\item[{\rm(b)}] 
There exists a constant $C_2>0$ such that 
$$
\l(\int_{E}\l(\sum_{j\ge i}\al_j\cE_jw\r)^{p'}\,d\mu\r)^{\frac1{p'}}
\le C_2
[wd\mu](E)^{\frac1{q'}}
$$
for any $E\in\cF_i^0$, $i\in\Z.$
\end{enumerate}
Moreover,
the least possible $C_1$ and $C_2$ are equivalent.
\end{theorem}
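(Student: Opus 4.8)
The plan is to recast the equivalence in dual form and then treat the two implications separately. Since each conditional expectation $\cE_i$ is selfadjoint with respect to $d\mu$ and $\al_i$ is $\cF_i$-measurable, $T_{\al}$ is selfadjoint; hence by duality in the pair $(L^p(d\mu),L^q(wd\mu))$ statement (a) is equivalent to the estimate $\|T_{\al}(gw)\|_{L^{p'}(d\mu)}\le C_1\|g\|_{L^{q'}(wd\mu)}$ for all $0\le g\in L^{q'}(wd\mu)$, with the same least admissible constant. Granting this, the implication (a)$\Rightarrow$(b) is obtained by testing the dual estimate on $g=1_E$ for $E\in\cF_i^0$: since $E\in\cF_i\subset\cF_j$ for every $j\ge i$ one has $\cE_j(1_Ew)=1_E\cE_jw$, so $T_{\al}(1_Ew)\ge 1_E\sum_{j\ge i}\al_j\cE_jw$ pointwise; restricting the $L^{p'}(d\mu)$-norm to $E$ and using $\|1_E\|_{L^{q'}(wd\mu)}=[wd\mu](E)^{1/q'}$ yields (b) with $C_2\le C_1$.

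The substantive direction is (b)$\Rightarrow$(a). I would work with the dual estimate and, after reducing by a truncation (finitely many nonzero $\al_i$) and a density argument to bounded $0\le f$ supported on a set of finite measure, estimate $\int_{\Omega}(T_{\al}(fw))^{p'}\,d\mu$ by a stopping-time, or ``principal set,'' decomposition, the martingale counterpart of Sawyer's principal cubes. Writing $\nu:=wd\mu$ and using the factorization $\cE_i(fw)=\cE_i^{\nu}(f)\,\cE_iw$, where $\cE_i^{\nu}$ denotes conditional expectation with respect to $\nu$, I would select a family $\cP$ of principal sets by stopping the $\nu$-martingale $(\cE_i^{\nu}f)_{i\in\Z}$ at the first scale where its value roughly doubles; these are essentially disjoint at each scale and obey the Carleson packing $\sum_{F'\subset F,\,F'\in\cP}\nu(F')\lesssim\nu(F)$, and on $F\in\cP$ (at scale $i_F$) the values $\cE_i^{\nu}f$ stay comparable to $c_F:=\nu(F)^{-1}\int_Ff\,d\nu$ until a principal descendant is reached. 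This produces a domination of $T_{\al}(fw)$ on $F$ by $c_F\big(\sum_{j\ge i_F}\al_j\cE_jw\big)1_F$ plus the analogous expressions on the principal descendants of $F$. Here condition (b) enters exactly as the building-block bound $\int_F\big(\sum_{j\ge i_F}\al_j\cE_jw\big)^{p'}d\mu\le C_2^{p'}\nu(F)^{p'/q'}$ (legitimate since $F\in\cF_{i_F}^0$), while the regularity hypothesis \eqref{1.5}, which forces $\ol{\al}_i\approx\cE_i\ol{\al}_i$ and hence makes $\ol{\al}_i$ behave as an $\cF_i$-measurable quantity, is what lets the tails $\sum_{j\ge i_F}\al_j\cE_jw$ be localized correctly to $F$; in the language of the discussion preceding the theorem, \eqref{1.5} is the device replacing the omitted direct testing condition \eqref{1.3}. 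Assembling the local bounds and using $p\le q$ (so $p'\ge q'$) to pass from an $\ell^{p'/q'}$- to an $\ell^1$-summation, together with the packing condition and Doob's maximal inequality (applicable since $q'>1$), should give $\int_{\Omega}(T_{\al}(fw))^{p'}d\mu\lesssim\big(\sum_{F\in\cP}c_F^{q'}\nu(F)\big)^{p'/q'}\lesssim\|f\|_{L^{q'}(\nu)}^{p'}$, i.e.\ the dual estimate with $C_1\lesssim C_2$; removing the truncation by monotone convergence completes the argument, and tracking constants in both directions yields the asserted equivalence of the least $C_1$ and $C_2$.

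The step I expect to be the main obstacle is precisely the assembly of the local estimates into the global $L^{p'}$-bound. A naive recursion based on $(a+b)^{p'}\le 2^{p'-1}(a^{p'}+b^{p'})$ fails, because the factor $2^{p'-1}>1$ accumulates over the generations of $\cP$; one must instead exploit the lacunarity of the stopping values $c_F$ (each at least doubling along a chain) to show that the sparse-type sum $\sum_{F\in\cP}c_F\big(\sum_{j\ge i_F}\al_j\cE_jw\big)1_F$ has $L^{p'}(d\mu)$-norm controlled by $\big(\sum_{F\in\cP}c_F^{q'}\nu(F)\big)^{1/q'}$ --- effectively an $L^{p'}$ Carleson embedding for the sequence $(c_F)_{F\in\cP}$ against the testing data supplied by (b) --- and it is here that hypothesis \eqref{1.5} is genuinely used. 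A secondary technical point is to carry out the principal-set construction and the identity $\cE_i(fw)=\cE_i^{\nu}f\cdot\cE_iw$ in a general $\sigma$-finite filtered measure space, where the $\cF_i$ need not be atomic and $\Omega$ need not have finite measure: this is handled by working along an exhaustion of $\Omega$ and invoking $\sigma$-finiteness to guarantee that all the conditional expectations in sight exist.
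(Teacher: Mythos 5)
Your duality reduction and the direction (a)$\Rightarrow$(b) are correct: since $T_{\al}$ is selfadjoint, (a) is equivalent to $\|T_{\al}(gw)\|_{L^{p'}(d\mu)}\le C_1\|g\|_{L^{q'}(wd\mu)}$, and testing this on $g=1_{E}$ with $\cE_j(1_{E}w)=1_{E}\cE_jw$ for $j\ge i$ gives (b) with $C_2\le C_1$; this half does not even use \eqref{1.5}.

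The gap is in (b)$\Rightarrow$(a), and it sits exactly where you flag it. After your principal-set decomposition you are left with a sum $\sum_{F\in\cP}c_F\,G_F$ where (b) only supplies $\|G_F\|_{L^{p'}(d\mu)}\le C_2\,\nu(F)^{1/q'}$, and you need $\bigl\|\sum_F c_F G_F\bigr\|_{L^{p'}(d\mu)}\lesssim\bigl(\sum_F c_F^{q'}\nu(F)\bigr)^{1/q'}$; the triangle inequality gives only $\sum_F c_F\nu(F)^{1/q'}$, and the quasi-orthogonality needed to do better is precisely the ``inclusion of cubes'' machinery of Lacey--Sawyer--Uriarte-Tuero and Treil, which the authors explicitly state seems difficult to transplant to a general filtered measure space: the pieces $G_F$ are not disjointly supported, the stopping regions at a fixed scale are general $\cF_i$-sets rather than unions of atoms, and the tail $\ol{\al}_{\tau_F}$ cut off at the random stopping time is not measurable at the scale of $F$. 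You assert that \eqref{1.5} rescues this step, but you never say how, and in the paper \eqref{1.5} plays a completely different role. The paper's proof avoids principal sets altogether: Lemma \ref{lem2.3} (a discrete Wolff-potential comparison) gives $\|T_{\al}v\|_{L^{p'}(d\mu)}^{p'}\approx\int_{\Omega}\sum_i\al_i\cE_iv\,(\cE_i(\ol{\al}_iv))^{p'-1}\,d\mu$; condition \eqref{1.5} converts this, for $v=gw$, into $\int_{\Omega}\sum_i\al_i\ol{\al}_i^{p'-1}(\cE_iw)^{p'}(\cE^w_ig)^{p'}\,d\mu$; the dual estimate then becomes an instance of the Carleson embedding theorem (Corollary \ref{cor3.4} with $\theta=p'/q'\ge1$, which is where $p\le q$ enters) applied to the $wd\mu$-martingale $\cE^w_ig$; and a second application of Lemma \ref{lem2.3} on $E$ identifies the resulting Carleson-measure condition with (b). To close your argument you would essentially have to prove that embedding theorem and that comparison anyway, so the stopping-time scaffolding does not substitute for the missing summation step.
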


In their papers \cite{CaOrVe1} and \cite{CaOrVe2}, 
Cascante, Ortega and Verbitsky 
established the characterization the weight $w$ for which the inequality \eqref{1.1} holds 
for $0<q<p<\infty$ and $1<p<\infty$ 
in terms of discrete Wolff's potential
in the cases when discrete positive integral operators are associated to the dyadic cubes in $\R^n$. 
The following theorem is an extension of their results to a filtered measure space. 
(cf. \cite{Ta,TaGu} in the Euclidean space).
Our condition \eqref{1.5} corresponds to 
\lq\lq the dyadic logarithmic bounded oscillation condition" 
introduced in \cite{CaOrVe1}.

\begin{theorem}\label{thm1.2}
Let 
$\al$ satisfy the condition {\rm\eqref{1.5}}, 
$w\in\cL^{+}$ be a weight and 
consider the following statements: 

\begin{enumerate}
\item[{\rm(a)}] 
There exists a constant $C_1>0$ such that 
$$
\|T_{\al}f\|_{L^q(wd\mu)}
\le C_1
\|f\|_{L^p(d\mu)};
$$
\item[{\rm(b)}] 
There exists a constant $C_2>0$ such that, 
for $\ds\frac1r=\frac1q-\frac1p$, 
$$
\|(\cW_{\al}[w])^{\frac1{p'}}\|_{L^r(wd\mu)}<c_2,
$$
where 
$$
\cW_{\al}[w]
:=
\sum_{i\in\Z}\al_i\ol{\al}_i^{p'-1}(\cE_iw)^{p'-1}
$$
is discrete Wolff's potential in a filtered measure space. 
\end{enumerate}

\noindent
Then, 
if $0<q<p<\infty$ and $1<p<\infty$, 
{\rm(b)} implies {\rm(a)} with $C_1\le CC_2$. 
Conversely, if $1<q<p<\infty$, 
{\rm(a)} implies {\rm(b)} with $C_2\le CC_1$. 
\end{theorem}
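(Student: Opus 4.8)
The plan is to adapt the Cascante--Ortega--Verbitsky argument to the filtered setting, with the refined Carleson embedding theorem of this paper playing the role of the dyadic Carleson embedding theorem. Since $T_{\al}$ is positive I may assume $f\ge0$ throughout, and I use freely that $T_{\al}$ is self-adjoint with respect to $d\mu$: because each $\al_i$ is $\cF_i$-measurable and conditional expectations are self-adjoint, $\int(T_{\al}f)\,g\,d\mu=\int f\,(T_{\al}g)\,d\mu$ for nonnegative $f,g$. The first step is to prove a \emph{discrete Wolff inequality}: for every $\varphi\in\cL^{+}$,
$$
\int_{\Omega}\l(T_{\al}\varphi\r)^{p'}d\mu
\approx
\int_{\Omega}\l(\sum_{i\in\Z}\al_i\ol{\al}_i^{p'-1}(\cE_i\varphi)^{p'-1}\r)\varphi\,d\mu ,
$$
the lower bound being elementary and the upper bound resting on the pointwise estimate $\bigl(\sum_i a_i\bigr)^{p'}\le p'\sum_i a_i\bigl(\sum_{j\le i}a_j\bigr)^{p'-1}$ for $a_i\ge0$, followed by a summation by parts in which hypothesis \eqref{1.5}, $\cE_i\ol{\al}_i\approx\ol{\al}_i$, is invoked (iteratively along the filtration) to treat $\ol{\al}_i$ as if it were $\cF_i$-measurable inside the conditional expectations. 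Taking $\varphi=w$ yields $\int_{\Omega}(T_{\al}w)^{p'}d\mu\approx\int_{\Omega}\cW_{\al}[w]\,w\,d\mu$.

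Next I would prove {\rm(b)}$\Rightarrow${\rm(a)} for $0<q<p$. When $q>1$, dualizing $L^q(wd\mu)$ and using self-adjointness reduces {\rm(a)} to the dual inequality $\|T_{\al}(gw)\|_{L^{p'}(d\mu)}\le C\,\|g\|_{L^{q'}(wd\mu)}$; when $0<q\le1$, ordinary $L^q(wd\mu)$-duality is unavailable, and instead one dualizes only at the $L^{p/q}$-level (which is legitimate as $p/q>1$) and argues with the resulting bilinear form, or passes beforehand to an equivalent formulation of {\rm(a)} expressed solely through the size of $T_{\al}(w\,d\mu)$. In every case one must bound $\int_{\Omega}(T_{\al}(gw))^{p'}d\mu$. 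Writing $\cE_i(gw)=(\cE_iw)\,E_i^{w}[g]$, where $E_i^{w}[g]:=\cE_i(gw)/\cE_iw$ is the conditional expectation of $g$ with respect to $w\,d\mu$, applying the Wolff inequality, and then using Jensen's (or H\"older's) inequality for $E_i^{w}$ to pull out a power of $g$, one is left with a quantity of the form $\int_{\Omega}\cW_{\al}[w]\,G\,w\,d\mu$ with $G$ built from $g$. H\"older's inequality in the pair $(r,r')$ applied to $(\cW_{\al}[w])^{1/p'}\in L^{r}(wd\mu)$, together with the Carleson embedding theorem of this paper, then absorbs $G$ into $\|g\|_{L^{q'}(wd\mu)}^{p'}$, the exponents fitting because $\tfrac1r=\tfrac1q-\tfrac1p$ and $\tfrac1r+\tfrac1{q'}=\tfrac1{p'}$. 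This gives $C_1\le CC_2$.

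For the converse {\rm(a)}$\Rightarrow${\rm(b)} in the range $1<q<p$, I would first test the dual inequality on $g=1_E$ with $E\in\cF_i^0$; since $\cE_j(1_Ew)=1_E\cE_jw$ for $j\ge i$, this produces exactly the Sawyer-type condition \eqref{1.4}, namely $\bigl(\int_E(\sum_{j\ge i}\al_j\cE_jw)^{p'}d\mu\bigr)^{1/p'}\le C_1[wd\mu](E)^{1/q'}$. Applying the Wolff inequality to the truncated filtration $(\cF_j)_{j\ge i}$ rewrites this as
$$
\int_{E}\l(\sum_{j\ge i}\al_j\ol{\al}_j^{p'-1}(\cE_jw)^{p'-1}\r)w\,d\mu
\le
CC_1^{p'}\,[wd\mu](E)^{p'/q'}\qquad(E\in\cF_i^0,\ i\in\Z).
$$
It then remains to upgrade this family of local bounds to the global estimate $\int_{\Omega}(\cW_{\al}[w])^{r/p'}w\,d\mu\le CC_1^{r}$. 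Because $p'/q'=1-p'/r$, this is a self-improvement of the type \lq\lq a local $L^1$-bound by a power larger than $1$ of the measure forces a global $L^{r/p'}$-bound''; I would prove it by a stopping-time (Calder\'on--Zygmund) decomposition along the filtration on the level sets of the truncated potentials, applying the displayed local bound on each stopping set, using $A_{\infty}$-type summability of $w$ over the stopping tree, and summing a geometric series---equivalently, once again via the Carleson embedding theorem. Since $r/p'>1$ precisely when $q>1$, this is exactly where the hypothesis $q>1$ is used, and it yields $C_2\le CC_1$.

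I expect the main obstacles to be two. The upper half of the discrete Wolff inequality is the first: in the Euclidean dyadic case one exploits the nesting of cubes, whereas here only the martingale structure is at hand, so \eqref{1.5} must be exploited with care---in iterated form---so that $\ol{\al}_i$ genuinely behaves like an $\cF_i$-measurable weight under the conditional expectations; it is this hypothesis, the analogue of the \lq\lq dyadic logarithmic bounded oscillation condition'', that makes the entire scheme work. The second is the self-improvement from the local, testing-type estimates to the global $L^{r/p'}(wd\mu)$ bound in the converse; this is the delicate step, it is what forces $1<q$ there, and it is handled most cleanly through the refined Carleson embedding theorem. Finally, the range $0<q\le1$ in the sufficiency direction needs the workaround indicated above, since $L^q(wd\mu)$-duality is not available.
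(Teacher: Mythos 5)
Your overall skeleton---the discrete Wolff inequality coming from Lemma \ref{lem2.3} together with \eqref{1.5}, followed by the Carleson embedding theorem---is the right one, but both directions as you describe them have genuine gaps, and in each case the paper takes a route that avoids exactly the difficulty you run into. For (b) $\Rightarrow$ (a), your reduction to the dual inequality $\|T_{\al}(gw)\|_{L^{p'}(d\mu)}\le C\|g\|_{L^{q'}(wd\mu)}$ is only legitimate for $q>1$, while the claim covers all $0<q<p$. You flag this, but the proposed workarounds (``dualize at the $L^{p/q}$-level'', ``pass to an equivalent formulation'') are not carried out, and it is not clear either one closes the argument, since the quantity to be controlled is $\|T_{\al}f\|_{L^q(wd\mu)}$ itself. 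The paper sidesteps duality entirely: it factors $T_{\al}f=\cW_{\al}[w]^{1/p'}\cdot\cW_{\al}[w]^{-1/p'}T_{\al}f$, applies H\"older with exponents $r/q$ and $p/q$ (valid for every $0<q<p$), and bounds the second factor by the auxiliary one-weight estimate $\|T_{\al}f\|_{L^p(vd\mu)}\le C\|f\|_{L^p(d\mu)}$ with $v=w\,\cW_{\al}[w]^{1-p}$ (Lemma \ref{lem2.4}), which is verified by checking the $q=p$ testing condition of Theorem \ref{thm1.1} via conditional H\"older. You would need this step, or a genuine substitute, to reach $0<q\le 1$.

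For (a) $\Rightarrow$ (b) (where $q>1$, so dualizing is fine), you keep only the testing consequence \eqref{1.4} on indicators and then try to self-improve the resulting local $L^1$ bounds on truncated Wolff potentials to the global $L^{r/p'}(wd\mu)$ bound by a stopping-time argument invoking ``$A_\infty$-type summability of $w$''; no such hypothesis on $w$ is available, and this local-to-global upgrade is precisely the hard part of the Cascante--Ortega--Verbitsky scheme that the paper does not need to reproduce. The paper instead retains the full dual inequality \eqref{2.3} for arbitrary $g$ and feeds it directly into the $q<p$ Carleson embedding theorem (Theorem \ref{thm3.5} / Corollary \ref{cor3.6}), whose relevant implication is a short duality-plus-Doob argument and outputs exactly $\|\cW_{\al}[w]^{1/p'}\|_{L^r(wd\mu)}\le CC_1$. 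Testing on indicators discards information you then have to recover by a much harder argument; as written, that recovery is not proved.
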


\begin{remark}\label{rem1.3}
In \cite{CaOrVe2}, 
in the cases when discrete positive integral operators are associated to the dyadic cubes in $\R^n$,
Cascante, Ortega and Verbitsky 
proved the equivalence between (a) and (b) 
in the full range $0<q<p<\infty$ and 
$1<p<\infty$. 
\end{remark}

Thanks to a powerful lemma (Lemma \ref{lem2.3} below) and the condition \eqref{1.5}, 
the proof of Theorems \ref{thm1.1} and \ref{thm1.2} 
can be reduced to the Carleson embedding (or measure) theorem. 
In Section \ref{sec3} 
we shall investigate that theorem in the setting of a filtered measure space 
(see Theorems \ref{thm3.1} and \ref{thm3.5}).
In Section \ref{sec4}, 
as an application of that theorem, 
we establish the analogue of Sawyer type characterization of weights 
for which two-weight norm inequality 
for the generalized Doob's maximal operator $M_{\al}$ holds 
(see Theorem \ref{thm4.1}). 
In Section \ref{sec5} 
we also establish Hyt\"{o}nen-P\'{e}rez type
one-weight norm estimate for Doob's maximal operator $f^{*}$ 
(see Theorem \ref{5.1}). 

Finally, we would like to comment on our weight class $\cL^{+}$. 

\begin{remark}\label{rem1.4}
Let 
$(\Omega,\cF,\mu;(\cF_i)_{i\in\Z})$ 
be a $\sg$-finite filtered measure space.
Then, it naturally contains a filtered probability space with a filtration indexed by $\N$ and 
a Euclidean space with a dyadic filtration. 
It also contains doubling metric measure space
with dyadic lattice constructed by Hyt\"{o}nen and Kairema \cite{HyKa}.
Our weight class $\cL^{+}$ coincides with 
a set of all locally integrable weights
in the case of the Euclidean space with a Lebesgue measure with a dyadic filtration. 
Since the dyadic $A_p$ weights in Euclidean space are locally integrable, 
it seems natural to introduce the class $\cL^{+}$. 
We could not find this class of weights 
in a filtered measure space in the literatures.
We notice that the class $L_{\cF^0}^1(\cF,\mu)$ 
used in several literatures does not include functions which grows at spacial infinity 
in the Euclidean space with $\cF$ a $\sigma$-algebra of the Lebesgue measurable sets
and $ \mu $ a Lebesgue measure.
\end{remark}

The letter $C$ will be used for constants
that may change from one occurrence to another.
Constants with subscripts, such as $C_1$, $C_2$, do not change
in different occurrences.
By $A\approx B$ we mean that 
$c^{-1}B\le A\le cB$ with some positive constant $c$ independent of appropriate quantities.

\section{Proof of Theorems 1.1 and 1.2}\label{sec2}
In what follows we prove Theorems \ref{thm1.1} and \ref{thm1.2}. 
We first list two basic properties of the conditional expectation 
and the definition of a martingale. 

Let $(\Omega,\cF,\mu)$ be a $\sg$-finite measure space 
and $\cG$ be a sub-$\sg$-algebra of $\cF$. 
Then the following holds.

\begin{enumerate}
\item[{\rm(i)}] 
Let 
$f\in L_{\cG^0}^1(\cF,\mu)$ 
and $g$ be a $\cG$-measurable function. 
Then the two conditions 
$fg\in L_{\cG^0}^1(\cF,\mu)$ 
and 
$gE[f|\cG]\in L_{\cG^0}^1(\cG,\mu)$ 
are equivalent and, 
assuming one of these conditions, we have 
$$
E[fg|\cG]=gE[f|\cG];
$$
\item[{\rm(ii)}] 
Let 
$f_1,f_2\in L_{\cG^0}^1(\cF,\mu)$. 
Then the three conditions 
$$
E[f_1|\cG]f_2\in L_{\cG^0}^1(\cG,\mu),\quad
E[f_1|\cG]E[f_2|\cG]\in L_{\cG^0}^1(\cG,\mu)
\text{ and }
f_1E[f_2|\cG]\in L_{\cG^0}^1(\cG,\mu)
$$
are all equivalent and, 
assuming one of these conditions, we have 
$$
E[E[f_1|\cG]f_2|\cG]
=
E[f_1|\cG]E[f_2|\cG]
=
E[f_1E[f_2|\cG]|\cG].
$$
\item[\rm{(iii)}]
Let $ \cG_1 \subset \cG_2~(\subset \cF) $ be two sub-$\sigma$-algebras of $\cF$ and let $ f \in L_{\cG_2^0}(\cF, \mu). $
Then 
$$
E[f | \cG_1] = E[E[f | \cG_2] | \cG_1].
$$
\end{enumerate}

(i) can be proved by an approximation by simple functions. 
The property (ii) means that 
conditional expectation operators are selfadjoint and
can be easily deduced from (i). (iii) can be proved easily
and called the tower rule of conditional expectations.

\begin{definition}\label{def2.1}
Let 
$(\Omega,\cF,\mu;(\cF_i)_{i\in\Z})$ 
be a $\sg$-finite filtered measure space.
Let $(f_i)_{i\in\Z}$ be a sequence of 
$\cF_i$-measurable functions. 
Then the sequence $(f_i)_{i\in\Z}$ is called a 
\lq\lq martingale" if 
$f_i\in L_{\cF_i^0}^1(\cF_i,\mu)$
and 
$f_i=\cE_if_j$ 
whenever $i<j$. 
\end{definition}

We also introduce the notion of a stopping time for later uses. 

\begin{definition}\label{def2.2}
Let 
$(\Omega,\cF,\mu;(\cF_i)_{i\in\Z})$ 
be a $\sg$-finite filtered measure space. 
Then a function 
$\tau:\,\Omega\rightarrow\{-\infty\}\cup\Z\cup\{+\infty\}$ 
is called a stopping time if for any $i\in\Z$ 
$$
\{\tau\le i\}
=
\{\omega\in\Omega:\,\tau(\omega)\le i\}
\in\cF_i.
$$
\end{definition}

Let $f_i$, $i\in\Z$, be an $\cF_i$-measurable function 
and let $\la\in\R$. Then, 
it is easy to see that 
$\tau:=\inf\{i:\,f_i>\la\}$ 
is a stopping time.
All the stopping times we will use are of this type.

Next we will state a principal lemma which plays a key role in the proof of Theorems \ref{thm1.1} and \ref{thm1.2}. 

\subsection{Principal lemma}\label{ssec2.1}
The following is the principal lemma, 
which is an extension of 
\cite[Theorem 2.1]{CaOrVe1} to a filtered measure space.

\begin{lemma}\label{lem2.3}
Let $\al_i$, $i\in\Z$, be a 
nonnegative bounded $\cF_i$-measurable function, 
let $s>1$ and $w\in\cL^{+}$ be a weight. Then 
the following quantities are equivalent: 
\begin{alignat*}{2}
A_1&:=\int_{\Omega}\l(\sum_{i\in\Z}\al_i\cE_iw\r)^s\,d\mu;
\\
A_2&:=\int_{\Omega}\sum_{i\in\Z}\al_i\cE_iw\l(\cE_i(\ol{\al}_iw)\r)^{s-1}\,d\mu;
\\
A_3&:=\int_{\Omega}\l(\sup_{i\in\Z}\cE_i(\ol{\al}_iw)\r)^s\,d\mu,
\end{alignat*}
where 
$\ds\ol{\al}_i:=\sum_{j\ge i}\al_j$.
\end{lemma}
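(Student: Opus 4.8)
The plan is to establish the chain of equivalences $A_1 \approx A_2 \approx A_3$ by a cyclic (or near-cyclic) argument, with the most substantial work concentrated in passing from the linearized quantity to the maximal quantity. I would first observe that $A_2 \lesssim A_1$ is essentially trivial: since $\al_i, \al_j \ge 0$ we have $\ol{\al}_i = \sum_{j \ge i}\al_j$, so $\cE_i(\ol{\al}_i w) = \sum_{j \ge i}\cE_i(\al_j w) = \sum_{j \ge i}\al_j \cE_i(\cE_j w)$ using property (i) to pull out $\al_j$ once we condition down (via the tower rule (iii)), but more to the point $\cE_i(\ol{\al}_i w) \le \sum_{j \ge i}\cE_i(\ol{\al}_j \cdot \text{stuff})$ — actually the clean estimate is $\sum_{i}\al_i \cE_i w \cdot (\cE_i(\ol{\al}_i w))^{s-1} \le (\sup_i \cE_i(\ol{\al}_i w))^{s-1}\sum_i \al_i \cE_i w$ only gives $A_2 \lesssim A_3^{(s-1)/s}A_1^{1/s}$, which already yields $A_2 \lesssim \max(A_1, A_3)$. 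So the real content is a two-sided comparison, and I would organize it as $A_3 \lesssim A_2 \lesssim A_1 \lesssim A_3$ (or fold in Hölder as above to close the loop).

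For $A_1 \lesssim A_2$, I would use the "exchange of order of summation" / telescoping trick that is standard in the dyadic setting (this is exactly the content of \cite[Theorem 2.1]{CaOrVe1} being generalized). Write $\Phi := \sum_i \al_i \cE_i w$ and, for each fixed $k$, split $\Phi = \sum_{i \le k}\al_i \cE_i w + \sum_{i > k}\al_i \cE_i w$. The key structural fact one needs is a pointwise comparison relating $\Phi$ raised to the power $s$ to $\sum_i \al_i \cE_i w \cdot \Phi^{s-1}$-type expressions where the second factor has been replaced by $\cE_i$ of something; this is where the martingale structure enters. Concretely I expect to integrate, use that $\int_\Omega \al_i \cE_i w \cdot (\text{an }\cF_i\text{-measurable thing}) \, d\mu = \int_\Omega \al_i w \cdot (\text{same thing})\,d\mu$ by self-adjointness (property (ii)), and then recognize $\sum_{j \ge i}\al_j\cE_j w$ or its conditional expectation. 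The elementary inequality $b^s - a^s \le s b^{s-1}(b-a)$ for $0 \le a \le b$, applied to the partial sums of the increasing-in-the-right-direction series, is the engine: summing a telescoping series of such differences reproduces $A_2$ up to the constant $s$.

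For the reverse-type bound $A_3 \lesssim A_2$ (hence closing the cycle, since $A_2 \lesssim A_1$ and $A_1 \lesssim$ [the maximal quantity via Doob] would need care), I would invoke Doob's maximal inequality for the martingale $(\cE_i(\ol{\al}_i w))_i$ — except that this is not quite a martingale because of the $\ol{\al}_i$ factor, so the honest route is: the sequence $N_i := \cE_i(\ol{\al}_0 w)$ (or an appropriate stopped/localized version) is a martingale for $i \le 0$, and one handles the $i > 0$ tail separately; then $A_3 = \|\sup_i \cE_i(\ol{\al}_i w)\|_{L^s}^s \lesssim \|\text{maximal function of a martingale}\|_{L^s}^s \lesssim$ something controlled by $A_1$ via Doob, and $A_1 \lesssim A_2$ was already shown. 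The main obstacle I anticipate is precisely this step: in the dyadic case one has a clean nested-cube structure and can run a stopping-time decomposition of the level sets $\{\sup_i \cE_i(\ol{\al}_i w) > \la\}$ into maximal stopping cubes; in the filtered measure space one must replace cubes by the stopping times $\tau_\la := \inf\{i : \cE_i(\ol{\al}_i w) > \la\}$ introduced after Definition \ref{def2.2}, verify these are genuine stopping times, control $\mu(\{\tau_\la \le i\})$ or the associated layer-cake integral, and sum a geometric series in $\la$. The condition \eqref{1.5} is not needed here (the lemma is stated for general $\al$), so I would be careful to make the stopping-time argument work using only self-adjointness, the tower rule, and $\sigma$-finiteness, which is the part most in danger of hiding an implicit regularity assumption.
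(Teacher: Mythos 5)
Your overall strategy matches the paper's: the elementary inequality $(\sum_i a_i)^s\le s\sum_i a_i(\sum_{j\ge i}a_j)^{s-1}$ combined with self-adjointness of the conditional expectations for $A_1\lesssim A_2$, H\"older's inequality to relate $A_2$ to $A_1$ and $A_3$, and Doob's maximal inequality for the quantity $A_3$; you are also right that the condition \eqref{1.5} plays no role here. However, there are two genuine gaps. The first concerns $A_1\lesssim A_2$: after the elementary inequality and self-adjointness you face $\int_\Omega\cE_i(\al_iw)\,\cE_i\bigl[(\sum_{j\ge i}\cE_j(\al_jw))^{s-1}\bigr]d\mu$, and to replace the inner factor by $(\cE_i(\ol{\al}_iw))^{s-1}$ (which is what $A_2$ requires, via the tower rule) you need conditional Jensen for $t\mapsto t^{s-1}$, which is concave only when $1<s\le 2$. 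For $s>2$ Jensen points the wrong way and your one-step telescoping argument does not close; the paper iterates the elementary inequality $k+1$ times with $k=\lceil s-2\rceil$ so that the residual exponent $s-k-1$ lies in $(0,1]$, and then recovers $A_1\le CA_1^{k/(s-1)}A_2^{(s-k-1)/(s-1)}$ by two applications of H\"older. Your sketch does not address this range of $s$ at all.

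The second gap is that you misidentify the comparison of $A_3$ with the other quantities as the main obstacle and propose a stopping-time decomposition of the level sets $\{\sup_i\cE_i(\ol{\al}_iw)>\la\}$ that you neither carry out nor need; your fallback of treating $N_i=\cE_i(\ol{\al}_0w)$ for $i\le 0$ and "the tail separately" does not obviously work since $\ol{\al}_i\ne\ol{\al}_0$. The step is in fact a one-liner in the direction $A_3\lesssim A_1$: by the tower rule, $\cE_i(\ol{\al}_iw)=\sum_{j\ge i}\cE_i(\cE_j(\al_jw))=\cE_i\bigl(\sum_{j\ge i}\al_j\cE_jw\bigr)\le\cE_i(T_{\al}w)$ pointwise, so $\sup_i\cE_i(\ol{\al}_iw)\le(T_{\al}w)^{*}$ and Doob's maximal inequality in $L^s$, $s>1$, gives $A_3\le C\|T_{\al}w\|_{L^s(d\mu)}^s=CA_1$. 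No stopping times appear anywhere in this lemma. Once $A_1\le CA_2$ and $A_3\le CA_1$ are in hand, the remaining inequality $A_2\le CA_3$ follows from the H\"older bound $A_2\le A_1^{1/s}A_3^{1/s'}$ that you yourself wrote down (substitute $A_1\le CA_2$ and absorb $A_2^{1/s}$), which is exactly how the paper closes the cycle.
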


\begin{proof}
By a standard limiting argument, 
we may assume without loss of generality that 
there are only a finite number of $\al_i\ne 0$ 
and $w$ is bounded and summable. 

\noindent{\bf (i)}\ \ 
We prove $A_1\le CA_2$. 
We use an elementary inequality
\begin{equation}\label{2.1}
\l(\sum_ia_i\r)^s
\le s
\sum_ia_i\l(\sum_{j\ge i}a_j\r)^{s-1},
\end{equation}
where $(a_i)_{i\in\Z}$ is a sequence of summable nonnegative reals. 
First, we verify the simple case 
$1<s\le 2$.
It follows from \eqref{2.1} that
\begin{alignat*}{2}
\lefteqn{
\int_{\Omega}\l(\sum_i\cE_i(\al_iw)\r)^s\,d\mu
}\\ &\le s
\sum_i
\int_{\Omega}
\cE_i(\al_iw)
\l(\sum_{j\ge i}\cE_j(\al_jw)\r)^{s-1}
\,d\mu
\\ &= s
\sum_i
\int_{\Omega}
\cE_i\cE_i(\al_iw)
\l(\sum_{j\ge i}\cE_j(\al_jw)\r)^{s-1}
\,d\mu
\\ &= s
\sum_i
\int_{\Omega}
\cE_i(\al_iw)
\cE_i\l[\l(\sum_{j\ge i}\cE_j(\al_jw)\r)^{s-1}\r]
\,d\mu,
\end{alignat*}
where we have used the fact that conditional expectation operators are selfadjoint. 
We notice that $s-1\le 1$. 
From Jensen's inequality and the tower rule of conditional expectations,
\begin{alignat*}{2}
&\le s
\sum_i
\int_{\Omega}
\cE_i(\al_iw)
\l(\sum_{j\ge i}\cE_i\cE_j(\al_jw)\r)^{s-1}
\,d\mu
\\ &= s
\int_{\Omega}
\sum_i
\cE_i(\al_iw)
\l(\cE_i(\ol{\al}_iw)\r)^{s-1}
\,d\mu.
\end{alignat*}
Next, we prove the case $s>2$. 
Let $k=\lceil s-2 \rceil$ 
be the smallest integer greater than $s-2$. 
Applying \eqref{2.1} $(k+1)$-times, we have 
\begin{alignat*}{2}
A_1
&=
\int_{\Omega}\l(\sum_i\cE_i(\al_iw)\r)^s\,d\mu
\\ &\le s(s-1)\cdots(s-k)
\\ &\times
\sum_{i_k\ge\cdots\ge i_1\ge i_0}
\int_{\Omega}
\cE_{i_0}(\al_{i_0}w)
\cE_{i_1}(\al_{i_1}w)
\cdots
\cE_{i_k}(\al_{i_k}w)
\l(\sum_{j\ge i_k}\cE_j(\al_jw)\r)^{s-k-1}
\,d\mu.
\end{alignat*}
Since 
$\cE_{i_0}(\al_{i_0}w)
\cE_{i_1}(\al_{i_1}w)
\cdots
\cE_{i_k}(\al_{i_k}w)$
becomes an $\cF_{i_k}$-measurable function,
the integral of the right-hand sides is equals to 
\begin{alignat*}{2}
\lefteqn{
\int_{\Omega}
\cE_{i_k}
\l[
\cE_{i_0}(\al_{i_0}w)
\cE_{i_1}(\al_{i_1}w)
\cdots
\cE_{i_k}(\al_{i_k}w)
\r]
\l(\sum_{j\ge i_k}\cE_j(\al_jw)\r)^{s-k-1}
\,d\mu
}\\ &=
\int_{\Omega}
\cE_{i_0}(\al_{i_0}w)
\cE_{i_1}(\al_{i_1}w)
\cdots
\cE_{i_k}(\al_{i_k}w)
\cE_{i_k}
\l[\l(\sum_{j\ge i_k}\cE_j(\al_jw)\r)^{s-k-1}\r]
\,d\mu
\\ &\le
\int_{\Omega}
\cE_{i_0}(\al_{i_0}w)
\cE_{i_1}(\al_{i_1}w)
\cdots
\cE_{i_k}(\al_{i_k}w)
\l(\cE_{i_k}(\ol{\al}_{i_k}w)\r)^{s-k-1}
\,d\mu,
\end{alignat*}
where we have used $0<s-k-1\le 1$. 
This yields 
$$
A_1
\le C
\int_{\Omega}
\l(\sum_i\cE_i(\al_iw)\r)^k
\l(\sum_i\cE_i(\al_iw)\l(\cE_i(\ol{\al}_iw)\r)^{s-k-1}\r)
\,d\mu.
$$
H\"{o}lder's inequality with exponent 
$\ds\frac{k}{s-1}+\frac{s-k-1}{s-1}=1$
gives 
$$
\sum_i\cE_i(\al_iw)\l(\cE_i(\ol{\al}_iw)\r)^{s-k-1}
\le
\l(\sum_i\cE_i(\al_iw)\r)^{\frac{k}{s-1}}
\l(\sum_i\cE_i(\al_iw)\l(\cE_i(\ol{\al}_iw)\r)^{s-1}\r)^{\frac{s-k-1}{s-1}},
$$
and, hence, 
$$
A_1
\le C
\int_{\Omega}
\l(\sum_i\cE_i(\al_iw)\r)^{\frac{sk}{s-1}}
\l(\sum_i\cE_i(\al_iw)\l(\cE_i(\ol{\al}_iw)\r)^{s-1}\r)^{\frac{s-k-1}{s-1}}
\,d\mu.
$$
H\"{o}lder's inequality with the same exponent gives 
$$
A_1
\le C
\l\{\int_{\Omega}\l(\sum_i\cE_i(\al_iw)\r)^s\,d\mu\r\}^{\frac{k}{s-1}}
\l\{\int_{\Omega}\sum_i\cE_i(\al_iw)\l(\cE_i(\ol{\al}_iw)\r)^{s-1}\,d\mu\r\}^{\frac{s-k-1}{s-1}}.
$$
Thus, we obtain 
$$
A_1\le CA_1^{\frac{k}{s-1}}A_2^{\frac{s-k-1}{s-1}}.
$$
This implies $A_1\le CA_2$.

\noindent{\bf (ii)}\ \ 
We prove $A_2\le CA_3$. 
It follows that 
\begin{alignat*}{2}
A_2
&=
\int_{\Omega}\sum_i\al_i\cE_iw\l(\cE_i(\ol{\al}_iw)\r)^{s-1}\,d\mu
\\ &\le 
\int_{\Omega}
\l\{\sum_i\al_i\cE_iw\r\}
\l\{\sup_j\cE_j(\ol{\al}_jw)\r\}^{s-1}
\,d\mu.
\end{alignat*}
H\"{o}lder's inequality gives 
$$
A_2
\le
A_1^{\frac1s}
A_3^{\frac1{s'}}.
$$
Since we have had $A_1\le CA_2$, 
we obtain 
$$
A_2
\le C
A_2^{\frac1s}
A_3^{\frac1{s'}},
$$
and $A_2\le CA_3$. 

\noindent{\bf (iii)}\ \ 
We prove $A_3\le CA_1$. 
It follows that 
\begin{alignat*}{2}
A_3
&=
\int_{\Omega}\l(\sup_i\cE_i(\ol{\al}_iw)\r)^s\,d\mu
\\ &\le 
\int_{\Omega}
\l\{\sup_i\cE_i\l(\sum_j\al_jw\r)\r\}^s\,d\mu
\\ &\le C
\int_{\Omega}\l(\sum_i\al_iw\r)^s\,d\mu
\\ &= CA_1,
\end{alignat*}
where we have used $s>1$ and Doob's maximal inequality. 
This completes the proof. 
\end{proof}

\subsection{Proof of Theorem 1.1}\label{ssec2.2}
Without loss of generality we may assume that 
$f$ is a nonnegative function. 
By duality (a) is equivalent to 
\begin{equation}\label{2.2}
\|T_{\al}(gw)\|_{L^{p'}(d\mu)}
\le C
\|g\|_{L^{q'}(wd\mu)}.
\end{equation}
It follows from Lemma \ref{lem2.3} that 
\begin{alignat*}{2}
\|T_{\al}(gw)\|_{L^{p'}(d\mu)}^{p'}
&\approx
\int_{\Omega}
\sum_i\al_i\cE_i(gw)\l(\cE_i(\ol{\al}_igw)\r)^{p'-1}
\,d\mu
\\ &\approx
\int_{\Omega}
\sum_i\al_i\ol{\al}_i^{p'-1}\l(\cE_i(gw)\r)^{p'}
\,d\mu,
\end{alignat*}
where we have used the condition \eqref{1.5}. 

We denote $\cE^w_if$ by 
the conditional expectation of $f$ with respect to $\cF_i$, 
$wd\mu$ in place of $d\mu$. 
We now claim that there holds 
$$
\frac{\cE_i(gw)}{\cE_iw}
=
\cE^w_ig.
$$
Indeed, by a simple limiting argument, 
if necessary, we can assume that $g$ is a bounded function. 
Then, since 
$\ds\frac{\cE_i(gw)}{\cE_iw}$ 
is an $\cF_i$-measurable function and 
belongs to $\cL^{+}$, 
for any $E\in\cF_i$,
\begin{alignat*}{2}
\lefteqn{
\int_{E}\frac{\cE_i(gw)}{\cE_iw}w\,d\mu
=
\int_{E}
\cE_i\l(\frac{\cE_i(gw)}{\cE_iw}\r)w\,d\mu
}\\ &=
\int_{E}
\frac{\cE_i(gw)}{\cE_iw}\cE_iw\,d\mu
=
\int_{E}\cE_i(gw)\,d\mu
=
\int_{E}gw\,d\mu.
\end{alignat*}

This claim yields 
$$
\|T_{\al}(gw)\|_{L^{p'}(d\mu)}^{p'}
\approx
\int_{\Omega}\sum_ia_i(\cE_iw)^{p'}(\cE^w_ig)^{p'}\,d\mu,
$$
where $a_i:=\al_i\ol{\al}_i^{p'-1}$. 
Thus, \eqref{2.2} is equivalent to 
\begin{equation}\label{2.3}
\l(\int_{\Omega}\sum_ia_i(\cE_iw)^{p'}(\cE^w_ig)^{p'}\,d\mu\r)^{\frac1{p'}}
\le C
\|g\|_{L^{q'}(wd\mu)}.
\end{equation}
By the Carleson embedding theorem (Corollary \ref{cor3.4} below), 
\eqref{2.3} is equivalent to the statement that 
there exists a constant $C>0$ such that 
\begin{equation}\label{2.4}
\int_{E}\sum_{j\ge i}a_j(\cE_jw)^{p'}\,d\mu
\le C
[wd\mu](E)^{\frac{p'}{q'}}
\end{equation}
holds for any $E\in\cF_i^0$, 
$i\in\Z$. 
{}From the condition \eqref{1.5} and Lemma 
\ref{lem2.3}\footnote{
We let 
$\ds
\al_j
:=
\begin{cases}
0\text{ for }j<i,
\\
1_{E}\al_j\text{ for }j\ge i.
\end{cases}
$}
there holds 
\begin{alignat*}{2}
\lefteqn{
\int_{E}\sum_{j\ge i}a_j(\cE_jw)^{p'}\,d\mu
}\\ &=
\int_{E}\sum_{j\ge i}\al_j\ol{\al}_j^{p'-1}(\cE_jw)^{p'}\,d\mu
\\ &\approx
\int_{E}
\sum_{j\ge i}
\al_j\cE_jw
\l(\cE_j(\ol{\al}_jw)\r)^{p'-1}
\,d\mu
\\ &\approx
\int_{E}\l(\sum_{j\ge i}\al_j\cE_jw\r)^{p'}\,d\mu.
\end{alignat*}
Hence, \eqref{2.4} is equivalent to 
$$
\l(\int_{E}\l(\sum_{j\ge i}\al_j\cE_jw\r)^{p'}\,d\mu\r)^{\frac1{p'}}
\le C
[wd\mu](E)^{\frac1{q'}}.
$$
Then we finish the proof. $\square$

\subsection{Proof of Theorem 1.2}\label{ssec2.3}
We need another  lemma. 

\begin{lemma}\label{lem2.4}
Let $1<p<\infty$, 
$\al$ satisfy the condition {\rm\eqref{1.5}} 
and $w$ be a weight. Then 
$$
\|T_{\al}f\|_{L^p(vd\mu)}
\le C
\|f\|_{L^p(d\mu)},
$$
where 
$$
v:=\frac{w}{\cW_{\al}[w]^{p-1}}
\text{ and }
\cW_{\al}[w]
:=
\sum_i\al_i\ol{\al}_i^{p'-1}(\cE_iw)^{p'-1}.
$$
\end{lemma}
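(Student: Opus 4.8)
The plan is to dualize, then run the machinery of \S\ref{ssec2.2}. Since $|T_\al f|\le T_\al|f|$ we may take $f\ge0$, and since the conditional expectations are selfadjoint and $\al_i$ is $\cF_i$-measurable, $T_\al$ is selfadjoint with respect to $\mu$; so, exactly as in the passage to \eqref{2.2} in the proof of Theorem~\ref{thm1.1}, the asserted inequality is equivalent to
\[
\|T_\al(gv)\|_{L^{p'}(d\mu)}\ \le\ C\,\|g\|_{L^{p'}(v\,d\mu)},\qquad g\ge0 ,
\]
and I would prove this dual form. A standard limiting argument reduces to the case of finitely many nonzero $\al_i$ with $w$ bounded and summable, so that $\cW_\al[w]$ is finite and $v=w/\cW_\al[w]^{p-1}$ is a genuine weight (where $\cW_\al[w]$ vanishes one checks that $w=0$ a.e. on $\{\al_i>0\}$ for every $i$, so $v$ may be taken $0$ there), and one may take $g$ bounded.

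Next I would apply Lemma~\ref{lem2.3} with $s=p'$ to the weight $gv$; use condition~\eqref{1.5} to replace $\cE_i(\ol{\al}_i\,gv)$ by a constant times $\ol{\al}_i\,\cE_i(gv)$ — this is how \eqref{1.5} was already used in \S\ref{ssec2.2}, resting only on slipping the $\cF_i$-measurable $\cE_i\ol{\al}_i\approx\ol{\al}_i$ in and out of $\cE_i$; and invoke the identity $\cE_i(gv)=(\cE_iv)(\cE^v_ig)$, proved just like the displayed claim in the proof of Theorem~\ref{thm1.1} with $v$ in place of $w$. This yields
\[
\|T_\al(gv)\|_{L^{p'}(d\mu)}^{p'}\ \approx\ \int_\Omega\sum_i \al_i\,\ol{\al}_i^{p'-1}(\cE_iv)^{p'}(\cE^v_ig)^{p'}\,d\mu .
\]
Replacing, again by \eqref{1.5}, the non-adapted $\ol{\al}_i^{p'-1}$ by $(\cE_i\ol{\al}_i)^{p'-1}$, the right side is comparable to $\int_\Omega\sum_i\lambda_i(\cE^v_ig)^{p'}\,d\mu$ with $\lambda_i:=\al_i(\cE_i\ol{\al}_i)^{p'-1}(\cE_iv)^{p'}$ nonnegative and $\cF_i$-measurable. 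The Carleson embedding theorem (Corollary~\ref{cor3.4}), applied in the filtered measure space $(\Omega,\cF,v\,d\mu;(\cF_i))$ at exponent $p'$, then reduces the dual inequality to the testing condition
\[
\int_E\sum_{j\ge i}\lambda_j\,d\mu\ \le\ C\int_E v\,d\mu\qquad(E\in\cF_i^0,\ i\in\Z),
\]
which, by \eqref{1.5} and Lemma~\ref{lem2.3} run backwards (replacing $\al_j$ by $1_E\al_j$ for $j\ge i$ and by $0$ for $j<i$, as in the footnote in \S\ref{ssec2.2}), is in turn equivalent to
\[
\int_E\Bigl(\sum_{j\ge i}\al_j\,\cE_jv\Bigr)^{p'}\,d\mu\ \le\ C\int_E v\,d\mu\qquad(E\in\cF_i^0,\ i\in\Z).
\]
Equivalently, one could simply apply Theorem~\ref{thm1.1} with $q=p$ to the weight $v$, which reduces Lemma~\ref{lem2.4} to precisely this inequality.

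Verifying the last display is the heart of the proof and the step I expect to be the main obstacle, since it is where the definition $v=w/\cW_\al[w]^{p-1}$ must be used in an essential way. The mechanism should be that $\cW_\al[w]=\sum_k\al_k\ol{\al}_k^{p'-1}(\cE_kw)^{p'-1}$ dominates, at each level $j$, the tail $\sum_{k\ge j}\al_k\ol{\al}_k^{p'-1}(\cE_kw)^{p'-1}$ of its own defining series; inserting this lower bound, raised to the negative power $1-p$, into $\cE_j v=\cE_j\bigl(w\,\cW_\al[w]^{1-p}\bigr)$ and estimating with Jensen's inequality, the tower rule and \eqref{1.5}, one expects $\sum_{j\ge i}\al_j\cE_jv$ to collapse to a bounded multiple of $v^{1/p'}$, whence the bound on integrating over $E$. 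The delicate point is that the whole sum defining $\cW_\al[w]$ must be used, not a single term: bounding $\cW_\al[w]$ below by its $j$-th term alone gives only the useless estimate $\cE_j(\ol{\al}_j v)\le C\,\al_j^{1-p}$, which is unbounded where $\al_j$ is small. So some telescoping over the levels $j\ge i$, or a stopping-time decomposition of $\Omega$ according to where partial sums of $\cW_\al[w]$ roughly double, will be needed; this is the filtered-measure-space counterpart of the relevant computation in \cite{CaOrVe1}. Granting this testing inequality, unwinding the chain of equivalences gives Lemma~\ref{lem2.4}.
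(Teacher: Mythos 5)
Your reduction is correct and coincides with the paper's own: by duality, Lemma~\ref{lem2.3}, condition~\eqref{1.5} and the Carleson embedding theorem (equivalently, by invoking Theorem~\ref{thm1.1} with $q=p$ and weight $v$), the lemma is equivalent to the testing condition
$$
\int_E\sum_{j\ge i}a_j(\cE_jv)^{p'}\,d\mu\le C\,[v\,d\mu](E),\qquad a_j=\al_j\ol{\al}_j^{\,p'-1},\quad E\in\cF_i^0,\ i\in\Z.
$$
But you stop exactly there: the verification of this testing condition, which you yourself call the heart of the proof and which is the entire content of the paper's argument, is only conjectured (``granting this testing inequality\dots''), and the mechanism you guess at --- a pointwise collapse of $\sum_{j\ge i}\al_j\cE_jv$ to a multiple of $v^{1/p'}$, to be obtained by telescoping or a stopping-time decomposition of the partial sums of $\cW_{\al}[w]$ --- is neither carried out nor what is actually needed. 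That is a genuine gap.

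The missing step is in fact short and needs no decomposition. Write $v=w^{1/p}\cdot w^{1/p'}\cW_{\al}[w]^{1-p}$ and apply conditional H\"older's inequality with exponents $p,p'$ (using $(1-p)p'=-p$) to get
$$
(\cE_jv)^{p'}\le(\cE_jw)^{p'-1}\,\cE_j\l(\frac{w}{\cW_{\al}[w]^{p}}\r).
$$
By \eqref{1.5} the factor $a_j(\cE_jw)^{p'-1}\approx\al_j(\cE_j\ol{\al}_j)^{p'-1}(\cE_jw)^{p'-1}$ is comparable to an $\cF_j$-measurable function, so it may be slipped inside $\cE_j$; since $E\in\cF_i\subset\cF_j$ for $j\ge i$, integrating over $E$ removes the conditional expectation, and the left side of the testing condition is controlled by
$$
\int_E\l(\sum_{j\ge i}a_j(\cE_jw)^{p'-1}\r)\frac{w}{\cW_{\al}[w]^{p}}\,d\mu
\le\int_E\cW_{\al}[w]\cdot\frac{w}{\cW_{\al}[w]^{p}}\,d\mu
=[v\,d\mu](E),
$$
the only use of the full sum being the trivial pointwise domination of the tail $\sum_{j\ge i}a_j(\cE_jw)^{p'-1}$ by $\cW_{\al}[w]=\sum_{j}a_j(\cE_jw)^{p'-1}$. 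So the ``delicate point'' you anticipate dissolves once H\"older is applied to each term before, rather than after, summing in $j$; with this computation inserted, your chain of equivalences does yield Lemma~\ref{lem2.4}.
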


\begin{proof}
We need only verify that 
the weight $v$ fulfill \eqref{2.4} with $q=p$. 
It suffices to show that 
there exists a constant $C>0$ such that 
$$
\int_{E}\sum_{j\ge i}a_j(\cE_jv)^{p'}\,d\mu
\le C
[vd\mu](E)
$$
holds for any $E\in\cF_i^0$, 
$i\in\Z$, where 
$a_j=\al_j\ol{\al}_j^{p'-1}$. 

By conditional H\"{o}lder's inequality we see that 
$$
(\cE_jv)^{p'}
\le
(\cE_jw)^{p'-1}
\cE_j\l(\frac{w}{\cW_{\al}[w]^p}\r).
$$
This implies 
\begin{alignat*}{2}
\lefteqn{
\int_{E}\sum_{j\ge i}a_j(\cE_jv)^{p'}\,d\mu
}\\ &\le
\int_{E}
\sum_{j\ge i}
a_j(\cE_jw)^{p'-1}
\cE_j\l(\frac{w}{\cW_{\al}[w]^p}\r)
\,d\mu
\\ &\approx
\int_{E}
\sum_{j\ge i}
\cE_j\l[a_j(\cE_jw)^{p'-1}\frac{w}{\cW_{\al}[w]^p}\r]
\,d\mu
\\ &\approx
\int_{E}
\sum_{j\ge i}
a_j(\cE_jw)^{p'-1}\frac{w}{\cW_{\al}[w]^p}
\,d\mu
\\ &\le C
\int_{E}\frac{w}{\cW_{\al}[w]^{p-1}}\,d\mu
=C
[vd\mu](E),
\end{alignat*}
where we have used the condition \eqref{1.5}. 
This is our desired inequality. 
\end{proof}

\begin{proof}[Proof of (b) $\,\Rightarrow\,$ (a)]
Recall that in this case 
$0<q<p<\infty$, $1<p<\infty$ and 
$\ds\frac1r=\frac1q-\frac1p$.
H\"{o}lder's inequality gives 
\begin{alignat*}{2}
\|T_{\al}f\|_{L^q(wd\mu)}
&=
\|\cW_{\al}[w]^{\frac1{p'}}\cdot \cW_{\al}[w]^{-\frac1{p'}}T_{\al}f\|_{L^q(wd\mu)}
\\ &\le
\|\cW_{\al}[w]^{\frac1{p'}}\|_{L^r(wd\mu)}
\|T_{\al}f(\cW_{\al}[w])^{-\frac1{p'}}\|_{L^p(wd\mu)}
\\ &\le C
\|\cW_{\al}[w]^{\frac1{p'}}\|_{L^r(wd\mu)}
\|f\|_{L^p(wd\mu)},
\end{alignat*}
where in the last inequality we have used Lemma \ref{lem2.4}. 
\end{proof}

\begin{proof}[Proof of (a) $\,\Rightarrow\,$ (b)]
Recall now that $1<q<\infty$ and 
$\ds\frac1r=\frac1q-\frac1p$. 
Our standing assumption is that \eqref{2.3} holds. 
Then the statement (b) is also a consequence of the Carleson embedding theorem (Corollary \ref{cor3.6} below). 
\end{proof}

\section{Carleson embedding theorem}\label{sec3}
In this section 
we will discuss the well-known Carleson embedding theorem in a filtered measure space.
The Carleson embedding theorem proved here is a refinement of several previous results
which are generalizations of the classical Carleson embedding theorem.
The related works we would like to mention are
\cite{Ke,Ar,Lo,BlJa,Gra}.

Kemppainen \cite{Ke} treats the Carleson embedding theorem 
in $\sg$-finite filtered measure space. 
His result corresponds to the case $p=2$ 
of Corollary \ref{cor3.4} below. 
Although his argument can be adapted to our situation, 
our assumptions about a filtered measure space is weaker than his and 
we also treat not only weighted measure 
but also general measure for the Carleson measure. 
Treating $p\ne q$ case is also new compared with his result. 
Related results which treats a vector-valued case are 
in \cite{Hy1,HyMcPo,HyKe}. 

Arai \cite{Ar} treats the Carleson measure 
in a continuously filtered probability space.  
His result corresponds to Corollaries \ref{cor3.3} and \ref{cor3.4}. 
While he only treats a probability space, 
we treat a $\sg$-finite measure space. 
Notice also that his Carleson measure definition uses any stopping time whereas our definition uses only a special type of stopping times. 

Long \cite{Lo} treats the Carleson measure in a discretely filtered probability space and proves the Carleson embedding theorem.
His result can be regarded as the discrete version of the result in Arai \cite{Ar}.
He treats all $(\cF_i)_{i\in\Z}$-measurable functions in his formulation of the Carleson embedding theorem 
and Theorem \ref{thm3.1} is similar to it in this respect.
  
The work of Blasco and Jarchows \cite{BlJa} 
investigates the Carleson measure on 
$\bar{D}$, i.e., 
a finite positive Borel measure $\mu$ on $ \bar{D} $
such that, for given values of 
$0<p,q<\infty$, 
the embedding 
$J_{\mu}:\,H^p(D)\rightarrow L^q(\bar{D},\mu)$ 
exists. Here, $D$ is a unit ball in the plane and $\bar{D}$ is a closure of the unit ball. 
Our theorems in this section which treat different exponents $ p, q $ are 
the analogues of their results in a setting of a filtered measure space.

Theorem \ref{thm3.1} corresponds to 
\cite[Theorem 7.3.5.]{Gra} 
which is the Carleson embedding theorem 
for functions in the half space. 
Theorem \ref{thm3.1} 
(,resp., \cite[Theorem 7.3.5.]{Gra}) 
treats arbitrary measurable functions instead of martingales 
(,resp., harmonic functions). 

Throughout this section 
we let 
$(\Omega,\cF,\mu;(\cF_i)_{i\in\Z})$ 
be a $\sg$-finite filtered measure space.
We also let $f_i$, $i\in\Z$, be an $\cF_i$-measurable nonnegative real-valued function 
and $\nu_i$ be a measure on $\cF_i$. 
Set a maximal function of $f=(f_i)$ by 
$f^{*}:=\sup_if_i$. 

\begin{theorem}\label{thm3.1}
Let $\theta\ge 1$ be arbitrarily taken and be fixed. 
Then the following conditions are equivalent:

\begin{enumerate}
\item[{\rm(i)}] 
There exists a constant $C_0>0$ such that 
$$
\sum_{j\ge i}\nu_j(E)\le C_0\mu(E)^{\theta}
$$
for any $E\in\cF_i$, $i\in\Z;$
\item[{\rm(ii)}] 
For \lq\lq any" $p\in(0,\infty)$ 
there exists a constant $C_p>0$ such that 
$$
\l(\sum_{i\in\Z}\int_{\Omega}f_i^{p\theta}\,d\nu_i\r)^{\frac1{p\theta}}
\le C_p
\|f^{*}\|_{L^p(d\mu)};
$$
\item[{\rm(iii)}] 
For \lq\lq some" $p_0\in(0,\infty)$ 
there exists a constant $C_{p_0}>0$ such that 
$$
\l(\sum_{i\in\Z}\int_{\Omega}f_i^{p_0\theta}\,d\nu_i\r)^{\frac1{p_0\theta}}
\le C_{p_0}
\|f^{*}\|_{L^{p_0}(d\mu)}.
$$
\end{enumerate}
Moreover,
the least possible $C_0$ and $C_p$ enjoy 
$$
C_p\le (C_0\theta)^{\frac1{p\theta}},
\qquad
C_0\le C_p^{p\theta}.
$$
\end{theorem}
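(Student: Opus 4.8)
The plan is to prove the cyclic chain (ii) $\Rightarrow$ (iii) $\Rightarrow$ (i) $\Rightarrow$ (ii), tracking constants throughout. The implication (ii) $\Rightarrow$ (iii) is immediate, taking $p=p_0$. For (iii) $\Rightarrow$ (i), fix $i\in\Z$ and $E\in\cF_i$; we may assume $\mu(E)<\infty$, since otherwise the asserted inequality is vacuous. Feed (iii) the admissible sequence $f_j:=1_E$ for $j\ge i$ and $f_j:=0$ for $j<i$ (legitimate because $E\in\cF_i\subset\cF_j$ whenever $j\ge i$); then $f^{*}=1_E$, the right-hand side equals $C_{p_0}\mu(E)^{1/p_0}$, and the left-hand side equals $\bigl(\sum_{j\ge i}\nu_j(E)\bigr)^{1/(p_0\theta)}$. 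Raising to the power $p_0\theta$ gives $\sum_{j\ge i}\nu_j(E)\le C_{p_0}^{p_0\theta}\mu(E)^{\theta}$, i.e., $C_0\le C_p^{p\theta}$.

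The substantive direction is (i) $\Rightarrow$ (ii). First I would reduce to a bound on level sets via the layer-cake formula and Tonelli's theorem: since each $f_i\ge0$ and $p\theta>0$,
\[
\sum_{i\in\Z}\int_{\Omega}f_i^{p\theta}\,d\nu_i
=p\theta\int_0^{\infty}\lambda^{p\theta-1}\Bigl(\sum_{i\in\Z}\nu_i(\{f_i>\lambda\})\Bigr)\,d\lambda,
\]
so everything comes down to the level-set estimate
\[
\sum_{i\in\Z}\nu_i(\{f_i>\lambda\})\le C_0\,\mu(\{f^{*}>\lambda\})^{\theta}\qquad(\lambda>0),
\]
which I would obtain by a stopping time argument. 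Fix $\lambda$ and $N\in\Z$ and set $\tau:=\inf\{i\ge N:f_i>\lambda\}$; this takes values in $\{N,N+1,\dots\}\cup\{+\infty\}$ and is a stopping time because $\{\tau\le i\}=\bigcup_{N\le k\le i}\{f_k>\lambda\}\in\cF_i$, so the sets $B_k:=\{\tau=k\}=\{\tau\le k\}\setminus\{\tau\le k-1\}$ lie in $\cF_k$, are pairwise disjoint, and satisfy $\bigsqcup_{k\ge N}B_k=\{\sup_{i\ge N}f_i>\lambda\}\subset\{f^{*}>\lambda\}$. Since $f_i(\omega)>\lambda$ forces $\tau(\omega)\le i$, we have $\{f_i>\lambda\}=\bigsqcup_{N\le k\le i}(\{f_i>\lambda\}\cap B_k)$ for $i\ge N$, whence $\nu_i(\{f_i>\lambda\})\le\sum_{N\le k\le i}\nu_i(B_k)$; summing over $i\ge N$ and exchanging the order of summation,
\[
\sum_{i\ge N}\nu_i(\{f_i>\lambda\})\le\sum_{k\ge N}\sum_{i\ge k}\nu_i(B_k)\le C_0\sum_{k\ge N}\mu(B_k)^{\theta}\le C_0\Bigl(\sum_{k\ge N}\mu(B_k)\Bigr)^{\theta}\le C_0\,\mu(\{f^{*}>\lambda\})^{\theta},
\]
the middle inequality being (i) applied to each $B_k\in\cF_k$ and the last using superadditivity of $t\mapsto t^{\theta}$ for $\theta\ge1$; letting $N\to-\infty$ by monotone convergence yields the level-set estimate for the full sum.

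To finish, integrate in $\lambda$ and extract the stated constant by Chebyshev's inequality. Set $M:=\int_{\Omega}(f^{*})^{p}\,d\mu$ and assume $M<\infty$ (otherwise (ii) is trivial). From $\mu(\{f^{*}>\lambda\})\le M\lambda^{-p}$ and $\theta\ge1$ we get $\mu(\{f^{*}>\lambda\})^{\theta}\le M^{\theta-1}\lambda^{-p(\theta-1)}\mu(\{f^{*}>\lambda\})$, so the three displays combine to
\[
\sum_{i\in\Z}\int_{\Omega}f_i^{p\theta}\,d\nu_i
\le C_0\,p\theta\,M^{\theta-1}\int_0^{\infty}\lambda^{p-1}\mu(\{f^{*}>\lambda\})\,d\lambda
=C_0\,\theta\,M^{\theta}.
\]
Taking $(p\theta)$-th roots gives (ii) with $C_p\le(C_0\theta)^{1/(p\theta)}$; together with (iii) $\Rightarrow$ (i) at $p_0=p$ this yields the two-sided comparison of the least constants.

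The main obstacle is the level-set estimate: the sets $\{f_i>\lambda\}$, $i\in\Z$, overlap heavily, so their $\nu_i$-measures cannot be summed crudely, while hypothesis (i) constrains only one $\sigma$-algebra $\cF_i$ at a time. The stopping time $\tau$ and its disjointification into the pieces $B_k=\{\tau=k\}$ are precisely what turn the overlapping family into one to which (i) can be applied level by level, and the hypothesis $\theta\ge1$ is exactly what permits reassembling the disjoint pieces. The only genuinely technical point is that the index set is $\Z$ rather than $\N$, which is why the stopping time is started at a finite level $N$ and $N$ is sent to $-\infty$ at the end.
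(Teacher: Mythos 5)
Your proposal is correct and follows essentially the same route as the paper: the same stopping-time disjointification of the level sets, the same application of (i) to the pieces $\{\tau=k\}$ followed by superadditivity of $t\mapsto t^{\theta}$, the same Chebyshev step in the layer-cake integral, and the same test functions for (iii) $\Rightarrow$ (i), yielding the identical constants $C_p\le(C_0\theta)^{1/(p\theta)}$ and $C_0\le C_p^{p\theta}$. The only (harmless) difference is that you handle the index set $\Z$ directly by starting the stopping time at a finite level $N$ and letting $N\to-\infty$, where the paper invokes a standard limiting argument to reduce to $\N$ first.
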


\begin{proof}
By a standard limiting argument, 
we can replace $ \Z $ by $\N$. 
Hence we consider 
$f_i$, $i\in\N$, and $\nu_i$, $i\in\N$. 

\noindent{\bf (i)~$\Rightarrow$~(ii)}\ \ 
For $\la>0$ we set 
$F=\{f^{*}>\la\}$ and 
$F_i=\{f_i>\la\}$, $i\in\N$. 
Then we have $F=\ds\bigcup_iF_i$. 
We define a stopping time $\tau$ by 
$$
\tau:=\inf\{i:\,f_i>\la\}.
$$
Using this, we set
$$
G_i=\{\tau=i\}
$$
for $i\in\N$.
Then we easily see that $G_i$'s are disjoint and that 
$F_i\subset\ds\bigcup_{j=0}^iG_j$. 
Hence, we have 
$F=\ds\bigcup_{i\in\N}G_i$. 

We define a measure space 
$(\Omega\times\N,\cG,\nu)$ by the following: 
\begin{enumerate}
\item[{\rm(1)}] 
$\cG$ is generated by 
$(\{i\}\times\cF_i)_{i\in\N}$;
\item[{\rm(2)}] 
$\nu|_{\{i\}\times\cF_i}=\nu_i$.
\end{enumerate}
\noindent
We can easily see that there exists a unique measure $\nu$ on $\cG$ satisfying \rm{(2)}.
We regard $f=(f_i)$ as a function on $\Omega\times\N$. 
Then we see that $f$ is a $\cG$-measurable function on $\Omega\times\N$. 

We estimate $\nu(\{f>\la\})$ from above by 
$\mu(\{f^{*}>\la\})$ as follows:
\begin{alignat*}{2}
\nu(\{f>\la\})
&\le
\sum_i\nu_i(F_i)
\le
\sum_i\nu_i\l(\bigcup_{0\le j\le i}G_j\r) 
=
\sum_j\sum_{i\ge j}\nu_i(G_j)
\le C_0
\sum_j\mu(G_j)^{\theta}
\\ &\le C_0
\l(\sum_j\mu(G_j)\r)^{\theta}
\le C_0
\mu\l(\bigcup_jG_j\r)^{\theta}
\le C_0
\mu(F)^{\theta}
=C_0
\mu(\{f^{*}>\la\})^{\theta},
\end{alignat*}
where we have used the assertion (i) and the fact that $\theta\ge 1$. 
Thus, we obtain 
\begin{alignat*}{2}
\lefteqn{
\frac1{p\theta}
\int_{\Omega\times\N}f^{p\theta}\,d\nu
=
\int_{0}^{\infty}
\la^{p\theta -1}\nu(\{f>\la\})
\,d\la
}\\ &\le C_0
\int_{0}^{\infty}
\la^{p\theta-1}\mu(\{f^{*}>\la\})^{\theta}
=C_0
\int_{0}^{\infty}
\l(\la^p\mu(\{f^{*}>\la\})\r)^{\theta-1}
\l(\la^{p-1}\mu(\{f^{*}>\la\})\r)
\,d\la
\\ &\le \frac{C_0}{p}
\|f^{*}\|_{L^p(d\mu)}^{p\theta-p}
\cdot
p\int_{0}^{\infty}\la^{p-1}\mu(\{f^{*}>\la\})\,d\la
=\frac{C_0}{p}
\|f^{*}\|_{L^p(d\mu)}^{p\theta-p}\|f^{*}\|_{L^p(d\mu)}^p
\\ &=\frac{C_0}{p}
\|f^{*}\|_{L^p(d\mu)}^{p\theta},
\end{alignat*}
where we have used Chebyshev's inequality. 
Taking $\ds\frac1{p\theta}$th power in both sides, we obtain 
$$
\l(\sum_i\int_{\Omega}f_i^{p\theta}\,d\nu_i\r)^{\frac1{p\theta}}
\le (C_0\theta)^{\frac1{p\theta}}
\|f^{*}\|_{L^p(d\mu)}.
$$
Hence we obtain the assertion (ii). 

\noindent{\bf (ii)~$\Rightarrow$~(iii)}\ \ 
Obvious.

\noindent{\bf (iii)~$\Rightarrow$~(i)}\ \ 
It suffices to take 
$\ds
f_j
:=
\begin{cases}
0\text{ for }j<i,
\\
1_{E}\text{ for }j\ge i.
\end{cases}
$. \\
This completes the proof.
\end{proof}

\begin{remark}\label{rem3.2}
Let $ \theta \geq 1. $
Let a sigma-algebra $ \cG $ on $ \Omega\times\Z $ be 
generated by $ (\{i\} \times \cF_i)_{i \in \Z}. $
We call a measure $\nu$ 
which is defined on $(\Omega\times\Z,\cG)$ 
a $\theta$-Carleson measure on $\Omega\times\Z$
if $ \nu_i := \nu|_{\{i\} \times \cF_i} $, $ i \in \Z $ satisfy the condition
(i) in Theorem \ref{thm3.1}.  
We call the infimum of $C_0$ in (i) in Theorem \ref{thm3.1} 
the $\theta$-Carleson measure norm of $ \nu $.
It is easy to see that the condition (i) in Theorem \ref{thm3.1} is equivalent to 
\begin{alignat}{1}\label{3.1}
\sup_{\tau}
\mu(\{\tau<\infty\})^{-\theta}
\nu(\{(\omega,k)\in\Omega\times\Z:\,k\ge \tau(\omega)\})
<\infty,
\end{alignat} 
where $\tau$ runs through all stopping times where $\mu(\{\tau<\infty\})$ is nonzero and finite,
and that $\theta$-Carleson measure norm of $\nu$ is equal to this quantity.
The concept of a "$\theta$-Carleson measure" 
was first introduced by \cite{Ar,Lo}
using \eqref{3.1} as a definition
when $ \theta = 1. $ 
\end{remark}

Thanks to Doob's maximal inequality, 
we have the following corollary of the theorem.

\begin{corollary}\label{cor3.3}
Let $(f_i)_{i\in\Z}$ be a martingale on $(\Omega,\cF,\mu)$ 
and $\theta\ge 1$ be arbitrarily taken and be fixed.
Then the following conditions are equivalent:

\begin{enumerate}
\item[{\rm(i)}] 
There exists a constant $C_0>0$ such that 
$$
\sum_{j\ge i}\nu_j(E)\le C_0\mu(E)^{\theta}
$$
for any $E\in\cF_i$, $i\in\Z;$
\item[{\rm(ii)}] 
For \lq\lq any" $p\in(1,\infty)$ 
there exists a constant $C_p>0$ such that 
$$
\l(\sum_{i\in\Z}\int_{\Omega}|f_i|^{p\theta}\,d\nu_i\r)^{\frac1{p\theta}}
\le C_p
\sup_{i\in\Z}\|f_i\|_{L^p(d\mu)};
$$
\item[{\rm(iii)}] 
For \lq\lq some" $p_0\in(1,\infty)$ 
there exists a constant $C_{p_0}>0$ such that 
$$
\l(\sum_{i\in\Z}\int_{\Omega}|f_i|^{p_0\theta}\,d\nu_i\r)^{\frac1{p_0\theta}}
\le C_{p_0}
\sup_{i\in\Z}\|f_i\|_{L^{p_0}(d\mu)}.
$$
\end{enumerate}
Moreover,
the least possible $C_0$ and $C_p$ enjoy 
$$
C_p\le C(C_0\theta)^{\frac1{p\theta}},
\qquad
C_0\le C_p^{p\theta}.
$$
\end{corollary}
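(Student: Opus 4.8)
The plan is to derive Corollary \ref{cor3.3} from Theorem \ref{thm3.1} applied to the nonnegative $\cF_i$-measurable functions $g_i:=|f_i|$, whose maximal function $\sup_i|f_i|$ I denote by $f^{*}$; the only new ingredient beyond Theorem \ref{thm3.1} is Doob's $L^p$ maximal inequality, used to pass between $\|f^{*}\|_{L^p(d\mu)}$ and $\sup_i\|f_i\|_{L^p(d\mu)}$. As in the proof of Theorem \ref{thm3.1}, a standard limiting argument lets me replace $\Z$ by $\N$, so that $(f_i)$ is a martingale indexed by $\N$ and the usual form of Doob's inequality is available; I may also assume $\sup_i\|f_i\|_{L^p(d\mu)}<\infty$, the estimates being vacuous otherwise.

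For \textbf{(i)~$\Rightarrow$~(ii)}, fix $p\in(1,\infty)$. Condition (i) involves only $\nu$ and $\mu$, so Theorem \ref{thm3.1} (the implication (i)~$\Rightarrow$~(ii)), applied to $g_i=|f_i|$, gives
$$
\l(\sum_{i}\int_{\Omega}|f_i|^{p\theta}\,d\nu_i\r)^{\frac1{p\theta}}
\le (C_0\theta)^{\frac1{p\theta}}\,\|f^{*}\|_{L^p(d\mu)}.
$$
Since $(f_i)$ is a martingale and $p>1$, Doob's maximal inequality yields $\|f^{*}\|_{L^p(d\mu)}\le\frac{p}{p-1}\sup_i\|f_i\|_{L^p(d\mu)}$, and combining the two estimates gives (ii) with $C_p\le C(C_0\theta)^{1/(p\theta)}$. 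The implication (ii)~$\Rightarrow$~(iii) is immediate.

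For \textbf{(iii)~$\Rightarrow$~(i)}, the test function $f_j=1_E$ used in Theorem \ref{thm3.1} is unavailable here because it is not a martingale, so instead, given $i\in\Z$ and $E\in\cF_i$ --- which I may assume satisfies $\mu(E)<\infty$, the inequality in (i) being trivial otherwise --- I take $f_j:=\cE_j 1_E$ (note $1_E\in\cL^{+}$). This is a martingale, it equals $1_E$ for $j\ge i$ since $E\in\cF_i\subset\cF_j$, and by conditional Jensen together with $\sigma$-finiteness of $(\Omega,\cF_j,\mu)$ one has $\|f_j\|_{L^{p_0}(d\mu)}^{p_0}=\int_{\Omega}(\cE_j 1_E)^{p_0}\,d\mu\le\int_{\Omega}\cE_j 1_E\,d\mu\le\mu(E)$, hence $\sup_j\|f_j\|_{L^{p_0}(d\mu)}\le\mu(E)^{1/p_0}$. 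Inserting this into (iii) and retaining on the left only the terms with $j\ge i$,
$$
\l(\sum_{j\ge i}\nu_j(E)\r)^{\frac1{p_0\theta}}
\le\l(\sum_{j}\int_{\Omega}|f_j|^{p_0\theta}\,d\nu_j\r)^{\frac1{p_0\theta}}
\le C_{p_0}\,\mu(E)^{\frac1{p_0}},
$$
which is precisely (i) with $C_0\le C_{p_0}^{p_0\theta}$.

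The substance of the corollary is entirely contained in Theorem \ref{thm3.1}; the only points needing care are the reduction $\Z\to\N$ that legitimizes Doob's $L^p$ inequality for a two-sided filtration, and the uniform bound $\sup_j\|\cE_j 1_E\|_{L^{p_0}(d\mu)}\le\mu(E)^{1/p_0}$ for the martingale test function --- in particular, for $j<i$ the estimate $\int_{\Omega}\cE_j 1_E\,d\mu\le\mu(E)$ must be obtained by exhausting $\Omega$ with sets of $\cF_j^0$ rather than read off directly. Neither is a genuine obstacle.
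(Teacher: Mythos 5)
Your proof is correct and follows exactly the route the paper intends: the paper states this corollary with no argument beyond the remark ``Thanks to Doob's maximal inequality,'' i.e., apply Theorem \ref{thm3.1} to $|f_i|$ and use Doob's $L^p$ inequality to replace $\|f^{*}\|_{L^p(d\mu)}$ by $\sup_i\|f_i\|_{L^p(d\mu)}$. Your additional care in (iii)~$\Rightarrow$~(i) --- replacing the non-martingale test function of Theorem \ref{thm3.1} by the martingale $\cE_j 1_E$ and checking $\sup_j\|\cE_j 1_E\|_{L^{p_0}(d\mu)}\le\mu(E)^{1/p_0}$ --- is a detail the paper leaves implicit, and you handle it correctly.
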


We have another corollary, where we only consider martingales consisting of the conditional expectations of a function.

\begin{corollary}\label{cor3.4}
Let $\theta\ge 1$ be arbitrarily taken and be fixed.
Then the following conditions are equivalent:

\begin{enumerate}
\item[{\rm(i)}] 
There exists a constant $C_0>0$ such that 
$$
\sum_{j\ge i}\nu_j(E)\le C_0\mu(E)^{\theta}
$$
for any $E\in\cF_i$, $i\in\Z;$
\item[{\rm(ii)}] 
For \lq\lq any" $p\in(1,\infty)$ 
there exists a constant $C_p>0$ such that 
$$
\l(\sum_{i\in\Z}\int_{\Omega}|\cE_if|^{p\theta}\,d\nu_i\r)^{\frac1{p\theta}}
\le C_p
\|f\|_{L^p(d\mu)};
$$
\item[{\rm(iii)}] 
For \lq\lq some" $p_0\in(1,\infty)$ 
there exists a constant $C_{p_0}>0$ such that 
$$
\l(\sum_{i\in\Z}\int_{\Omega}|\cE_if|^{p_0\theta}\,d\nu_i\r)^{\frac1{p_0\theta}}
\le C_{p_0}
\|f\|_{L^{p_0}(d\mu)}.
$$
\end{enumerate}
Moreover,
the least possible $C_0$ and $C_p$ enjoy 
$$
C_p\le C(C_0\theta)^{\frac1{p\theta}},
\qquad
C_0\le C_p^{p\theta}.
$$
\end{corollary}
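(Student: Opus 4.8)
The plan is to deduce Corollary \ref{cor3.4} directly from Corollary \ref{cor3.3} by specializing the martingale. Given a function $f\in L^p(d\mu)$, set $f_i:=\cE_i f$; by the tower rule of conditional expectations recorded in property (iii) above, the family $(f_i)_{i\in\Z}=(\cE_i f)_{i\in\Z}$ is a martingale on $(\Omega,\cF,\mu)$, so Corollary \ref{cor3.3} applies to it verbatim. Thus (i) of Corollary \ref{cor3.4} is exactly (i) of Corollary \ref{cor3.3}, and to pass between the two sets of conditions (ii), (iii) it suffices to identify the right-hand sides, i.e.\ to show
\[
\sup_{i\in\Z}\|\cE_i f\|_{L^p(d\mu)}\approx\|f\|_{L^p(d\mu)}
\]
for $1<p<\infty$, with constants independent of $f$. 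The inequality $\sup_i\|\cE_i f\|_{L^p(d\mu)}\le\|f\|_{L^p(d\mu)}$ is the conditional Jensen inequality (applied to the convex function $t\mapsto|t|^p$) together with the fact that $\cE_i$ preserves the integral, so $\int_\Omega|\cE_i f|^p\,d\mu\le\int_\Omega\cE_i(|f|^p)\,d\mu=\int_\Omega|f|^p\,d\mu$; here one uses $\sigma$-finiteness of $(\Omega,\cF_i,\mu)$ to justify $\int\cE_i(|f|^p)\,d\mu=\int|f|^p\,d\mu$ via an exhaustion of $\Omega$ by $\cF_i^0$-sets.

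For the reverse inequality one wants $\|f\|_{L^p(d\mu)}\le C\sup_i\|\cE_i f\|_{L^p(d\mu)}$. Here I would invoke the martingale convergence theorem: since $\sup_i\|\cE_i f\|_{L^p}$ is finite, the martingale $(\cE_i f)$ is $L^p$-bounded, hence converges $\mu$-a.e.\ and in $L^p(d\mu)$ as $i\to+\infty$ to its limit, and one must check that this limit equals $f$ (this is where some care is needed — it uses that $\bigcup_i\cF_i$ generates $\cF$, or at least that $f$ is measurable with respect to the relevant limit $\sigma$-algebra; the implicit standing assumption in this paper, as in \cite{Hy2}, is that $\cF=\sigma(\bigcup_i\cF_i)$). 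Then $\|f\|_{L^p(d\mu)}=\lim_i\|\cE_i f\|_{L^p(d\mu)}\le\sup_i\|\cE_i f\|_{L^p(d\mu)}$, and in fact the constant here is $1$. Alternatively, and more self-containedly, one can avoid convergence theorems: by Doob's maximal inequality $\|f^*\|_{L^p(d\mu)}\le p'\sup_i\|\cE_i f\|_{L^p(d\mu)}$ when $(\cE_i f)$ is $L^p$-bounded, and then apply Theorem \ref{thm3.1} with $f^*=(\cE_i f)^*$ in place of the generic maximal function, using $|\cE_i f|\le f^*$; this routes everything through Theorem \ref{thm3.1} and Doob's inequality exactly as Corollary \ref{cor3.3} was obtained, and keeps the constant-tracking transparent so that the stated relations $C_p\le C(C_0\theta)^{1/(p\theta)}$ and $C_0\le C_p^{p\theta}$ come out with the same $C$ (a power of $p'$).

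Concretely, the steps in order are: (1) observe $(\cE_i f)_i$ is a martingale, so (i) of the two corollaries coincide; (2) for (i)$\Rightarrow$(ii): apply (i)$\Rightarrow$(ii) of Corollary \ref{cor3.3} (equivalently Theorem \ref{thm3.1}) to $f_i=\cE_i f$, getting the bound in terms of $\|(\cE_i f)^*\|_{L^p(d\mu)}$, then dominate $\|(\cE_i f)^*\|_{L^p(d\mu)}\le p'\|f\|_{L^p(d\mu)}$ by Doob's maximal inequality and conditional Jensen; (3) (ii)$\Rightarrow$(iii) is trivial; (4) for (iii)$\Rightarrow$(i): test with $f=1_E$, $E\in\cF_i^0$, so that $\cE_j f=1_E$ for $j\ge i$ and the left side is $\bigl(\sum_{j\ge i}\nu_j(E)\bigr)^{1/(p\theta)}$ while the right side is $\mu(E)^{1/p}$, yielding $C_0\le C_{p_0}^{p_0\theta}$. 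The main obstacle is purely bookkeeping: making sure the constants in the "moreover" clause match those of Corollary \ref{cor3.3}, which amounts to checking that the only extra loss incurred in replacing $\sup_i\|f_i\|_{L^p}$ by $\|f\|_{L^p}$ is the Doob constant $p'$ (already present, hidden in the $C$ of Corollary \ref{cor3.3}), and in the converse direction there is no loss at all since testing on $1_E$ is lossless. No genuinely new idea beyond Corollary \ref{cor3.3} and Doob's inequality is required.
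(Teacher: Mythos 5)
Your proposal is correct and follows essentially the same route as the paper, which states Corollary \ref{cor3.4} without separate proof as an immediate consequence of Theorem \ref{thm3.1} applied to $f_i=|\cE_if|$ together with Doob's maximal inequality $\|(\cE_if)^*\|_{L^p(d\mu)}\le p'\|f\|_{L^p(d\mu)}$ and the test function $f=1_E$ for the converse. Your opening digression through martingale convergence (and the hypothesis $\cF=\sigma(\bigcup_i\cF_i)$) is unnecessary, since only the one-sided inequality $\sup_i\|\cE_if\|_{L^p(d\mu)}\le\|f\|_{L^p(d\mu)}$ is ever needed, and your own ``alternative, more self-contained'' route via Doob's inequality is exactly the intended argument.
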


We next consider the Carleson embedding theorem for the case $q<p$. 

\begin{theorem}\label{thm3.5}
Let $w_i$, $i\in\Z$, be an $\cF_i$-measurable nonnegative real-valued function 
and $w\in\cL^{+}$ be a weight. 
Suppose that $\ds\frac{w_i}{\cE_iw}$, 
$i\in\Z$, belong to the class $\cL^{+}$. 
Let $\theta>1$ be arbitrarily taken and be fixed.
Then the following conditions are equivalent:

\begin{enumerate}
\item[{\rm(i)}] 
There exists a constant $C_0>0$ such that 
$$
\l\|\sum_{i\in\Z}\frac{w_i}{\cE_iw}\r\|_{L^{\theta'}(wd\mu)} \leq C_0;
$$
\item[{\rm(ii)}] 
For \lq\lq any" $q\in(0,\infty)$ 
there exists a constant $C_q>0$ such that 
$$
\l(\sum_{i\in\Z}\int_{\Omega}w_if_i^q\,d\mu\r)^{\frac1q}
\le C_q
\|f^{*}\|_{L^{q\theta}(wd\mu)};
$$
\item[{\rm(iii)}] 
For \lq\lq some" $q_0\in(0,\infty)$ 
there exists a constant $C_{q_0}>0$ such that 
$$
\l(\sum_{i\in\Z}\int_{\Omega}w_if_i^{q_0}\,d\mu\r)^{\frac1{q_0}}
\le C_{q_0}
\|f^{*}\|_{L^{q_0\theta}(wd\mu)}.
$$
\end{enumerate}
Moreover,
the least possible $C_0$ and $C_q$ enjoy 
$$
C_q\le C_0^{\frac1q},
\qquad
C_0\le CC_q.
$$
\end{theorem}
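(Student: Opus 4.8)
The plan is to run the cycle (i) $\Rightarrow$ (ii) $\Rightarrow$ (iii) $\Rightarrow$ (i). The implication (ii) $\Rightarrow$ (iii) is trivial, so the work lies in (i) $\Rightarrow$ (ii) — a direct application of H\"older's inequality — and in (iii) $\Rightarrow$ (i), which I would obtain by testing (iii) against a carefully chosen family $(f_i)$ manufactured from $\Phi:=\sum_{i\in\Z}\frac{w_i}{\cE_iw}$ itself. I write $\cE^w_i$ for the conditional expectation with respect to $\cF_i$ in the measure $w\,d\mu$, and I use throughout the identity $\cE^w_ig=\cE_i(gw)/\cE_iw$ proved inside the proof of Theorem~\ref{thm1.1}. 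A preliminary reduction, in the spirit of the one opening the proof of Lemma~\ref{lem2.3}, lets me assume in (iii) $\Rightarrow$ (i) that all functions and integrals in play are finite: concretely one would first prove the desired bound with $\Phi$ replaced by $\l(\sum_{|i|\le K}\frac{w_i}{\cE_iw}\r)\wedge N$ cut off to a set $B_M\in\cF_0^0$ exhausting $\Omega$, for which the test family below is bounded and everything converges, and then let $K,N,M\to\infty$ by monotone convergence (a little care is needed here only because the class $\cL^{+}$ is not local, so one must keep invoking (iii) for the original data).

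For (i) $\Rightarrow$ (ii): let $q\in(0,\infty)$ and let $(f_i)$ be admissible. Since $f_i$ and $w_i/\cE_iw$ are $\cF_i$-measurable, the defining property of conditional expectation gives $\int_\Omega w_if_i^q\,d\mu=\int_\Omega\frac{w_i}{\cE_iw}f_i^q\,w\,d\mu$ for each $i$. Summing and using $0\le f_i\le f^{*}$,
\[
\sum_{i\in\Z}\int_\Omega w_if_i^q\,d\mu
=\int_\Omega\l(\sum_{i\in\Z}\frac{w_i}{\cE_iw}f_i^q\r)w\,d\mu
\le\int_\Omega\Phi\,(f^{*})^q\,w\,d\mu
\le\|\Phi\|_{L^{\theta'}(w\,d\mu)}\,\|f^{*}\|_{L^{q\theta}(w\,d\mu)}^{q},
\]
the last step being H\"older's inequality with exponents $\theta'$ and $\theta$ in the measure $w\,d\mu$. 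Taking $q$-th roots gives (ii) with $C_q\le C_0^{1/q}$.

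For (iii) $\Rightarrow$ (i): after the reduction above, put $g:=\Phi^{\theta'-1}$ (now bounded) and feed into (iii) the admissible family $f_i:=(\cE^w_ig)^{1/q_0}$. Reversing the computation of the previous step,
\[
\sum_{i\in\Z}\int_\Omega w_if_i^{q_0}\,d\mu
=\sum_{i\in\Z}\int_\Omega\frac{w_i}{\cE_iw}\,\cE_i(gw)\,d\mu
=\int_\Omega g\,\Phi\,w\,d\mu
=\int_\Omega\Phi^{\theta'}w\,d\mu=:I .
\]
On the other hand $f^{*}=\l(\sup_i\cE^w_ig\r)^{1/q_0}$, and since $\theta>1$, Doob's maximal inequality in $L^{\theta}(w\,d\mu)$ together with the identity $(\theta'-1)\theta=\theta'$ yields
\[
\|f^{*}\|_{L^{q_0\theta}(w\,d\mu)}^{q_0}
=\l(\int_\Omega\l(\sup_i\cE^w_ig\r)^{\theta}w\,d\mu\r)^{1/\theta}
\le C\l(\int_\Omega g^{\theta}w\,d\mu\r)^{1/\theta}
=C\,I^{1/\theta}.
\]
Raising (iii) to the power $q_0$ and combining the two displays gives $I\le CC_{q_0}^{q_0}\,I^{1/\theta}$; since $I<\infty$ after the reduction, this forces $I^{1/\theta'}\le CC_{q_0}^{q_0}$, i.e. $\|\Phi\|_{L^{\theta'}(w\,d\mu)}\le CC_{q_0}^{q_0}$. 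Removing the cut-offs yields (i), and reading off the constants in the two implications gives the stated quantitative relations.

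The step I expect to be the real obstacle is (iii) $\Rightarrow$ (i): the crux is to recognise that the right object to test (iii) on is $f_i=(\cE^w_ig)^{1/q_0}$ with $g$ the appropriate power of $\Phi=\sum_i w_i/\cE_iw$, so that the left side of (iii) telescopes exactly to $\int_\Omega\Phi^{\theta'}w\,d\mu$ while the maximal function $f^{*}$ is dominated — via Doob's inequality, the one place where $\theta>1$ is genuinely needed — by the \emph{same} integral raised to the strictly smaller power $1/\theta$; a self-improvement then closes the argument. The ancillary truncation making $I$ a priori finite is routine but, as noted, must be arranged so that (iii) is only ever applied to the original $(w_i)$ and $w$.
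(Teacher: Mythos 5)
Your proof is correct and follows essentially the same route as the paper's: the identical H\"older computation (after passing to the measure $w\,d\mu$ via self-adjointness of $\cE_i$) for (i)$\Rightarrow$(ii), and for (iii)$\Rightarrow$(i) the same key test family $f_i=(\cE^w_ig)^{1/q_0}$ together with Doob's maximal inequality in $L^{\theta}(w\,d\mu)$ --- the paper merely closes by duality over arbitrary nonnegative $g\in L^{\theta}(w\,d\mu)\cap L^{\infty}(w\,d\mu)$ instead of your choice $g=\Phi^{\theta'-1}$ plus absorption, a cosmetic difference. One small remark: what you actually (and correctly) obtain is $C_0\le CC_{q_0}^{q_0}$, which is the homogeneity-correct version of the relation $C_0\le CC_q$ stated in the theorem.
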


\begin{proof}
\noindent{\bf (i)~$\Rightarrow$~(ii)}\ \ 
It follows that 
$$
\sum_i\int_{\Omega}w_if_i^q\,d\mu
=
\sum_i\int_{\Omega}\frac{w_i}{\cE_iw}f_i^q\cE_iw\,d\mu.
$$
By a simple limiting argument, 
if necessary, we can assume that $f_i$ is a bounded function. 
Then, since 
$\ds\frac{w_i}{\cE_iw}f_i^q$
is an $\cF_i$-measurable function and 
belongs to the class $\cL^{+}$, 
\begin{alignat*}{2}
&=
\sum_i\int_{\Omega}\cE_i\l[\frac{w_i}{\cE_iw}f_i^qw\r]\,d\mu
\\ &=
\sum_i\int_{\Omega}\frac{w_i}{\cE_iw}f_i^qw\,d\mu
\\ &\le
\int_{\Omega}
\l(\sum_i\frac{w_i}{\cE_iw}\r)
\l(\sup_jf_j\r)^q
w\,d\mu.
\end{alignat*}
H\"{o}lder's inequality with exponent $\theta$ gives 
$$
\le 
\l\|\sum_i\frac{w_i}{\cE_iw}\r\|_{L^{\theta'}(wd\mu)}
\|f^{*}\|_{L^{q\theta}(wd\mu)}^q.
$$
This yields the assertion (ii) with 
$\ds C_q\le C_0^{\frac1q}$.

\noindent{\bf (ii)~$\Rightarrow$~(iii)}\ \ 
Obvious. 

\noindent{\bf (iii)~$\Rightarrow$~(i)}\ \ 
It follows that, for nonnegative 
$g\in L^{\theta}(wd\mu)\cap L^{\infty}(wd\mu)$, 
\begin{alignat*}{2}
\lefteqn{
\int_{\Omega}\l(\sum_i\frac{w_i}{\cE_iw}\r)gw\,d\mu
=
\sum_i\int_{\Omega}\frac{w_i}{\cE_iw}gw\,d\mu
}\\ &=
\sum_i\int_{\Omega}\cE_i\l(\frac{w_i}{\cE_iw}\r)gw\,d\mu
=
\sum_i\int_{\Omega}\frac{w_i}{\cE_iw}\cE_i(gw)\,d\mu
\\ &=
\sum_i\int_{\Omega}w_i\frac{\cE_i(gw)}{\cE_iw}\,d\mu
=
\sum_i\int_{\Omega}w_i\l\{\l(\frac{\cE_i(gw)}{\cE_iw}\r)^{\frac1{q_0}}\r\}^{q_0}\,d\mu
\\ &\le C_{q_0}
\l\|\l(\sup_i\frac{\cE_i(gw)}{\cE_iw}\r)^{\frac1{q_0}}\r\|_{L^{q_0\theta}(wd\mu)}^{q_0}
\le CC_{q_0}
\|g\|_{L^{\theta}(wd\mu)},
\end{alignat*}
where we have used the assertion (iii) and 
Doob's maximal inequality.
By a limiting argument and duality we must have
$$
\l\|\sum_i\frac{w_i}{\cE_iw}\r\|_{L^{\theta'}(wd\mu)}
\le CC_{q_0}.
$$
This yields $C_0\le CC_{q_0}$ 
and completes the proof.
\end{proof}

\begin{corollary}\label{cor3.6}
Let $w_i$, $i\in\Z$, be an $\cF_i$-measurable nonnegative real-valued function 
and $w\in\cL^{+}$ be a weight. 
Suppose that $\ds\frac{w_i}{\cE_iw}$, 
$i\in\Z$, belong to the class $\cL^{+}$. 
Let $0<q<p<\infty$ and 
$1<p<\infty$. Then 
the following conditions are equivalent:

\begin{enumerate}
\item[{\rm(i)}] 
For $\ds\frac1r=\frac1q-\frac1p$ 
there exists a constant $C_1>0$ such that 
$$
\l\|\l(\sum_{i\in\Z}\frac{w_i}{\cE_iw}\r)^{\frac1q}\r\|_{L^r(wd\mu)} \leq C_1;
$$
\item[{\rm(ii)}] 
There exists a constant $C_2>0$ such that 
$$
\l(\sum_{i\in\Z}\int_{\Omega}w_i|\cE_if|^q\,d\mu\r)^{\frac1q}
\le C_2
\|f\|_{L^p(wd\mu)}.
$$
\end{enumerate}
Moreover,
the least possible $C_1$ and $C_2$ are equivalent. 
\end{corollary}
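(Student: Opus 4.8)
The plan is to deduce this from Theorem~\ref{thm3.5} by choosing the auxiliary exponent $\theta:=p/q$. Since $0<q<p<\infty$ we have $\theta>1$, so Theorem~\ref{thm3.5} applies, and the numerology works out: $q\theta=p$, and $\frac1{q\theta'}=\frac1q\l(1-\frac1\theta\r)=\frac1q-\frac1p=\frac1r$, so $q\theta'=r$. Consequently
\[
\l\|\l(\sum_{i\in\Z}\tfrac{w_i}{\cE_iw}\r)^{\frac1q}\r\|_{L^r(wd\mu)}
=\l\|\sum_{i\in\Z}\tfrac{w_i}{\cE_iw}\r\|_{L^{\theta'}(wd\mu)}^{\frac1q},
\]
which identifies condition (i) here with condition (i) of Theorem~\ref{thm3.5} (with $C_0=C_1^q$), while the right-hand side of Theorem~\ref{thm3.5}(ii), read with its exponent equal to our $q$, equals $\|f^{*}\|_{L^{q\theta}(wd\mu)}=\|f^{*}\|_{L^p(wd\mu)}$. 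So the whole statement follows once condition (ii) here is matched, with comparable constants, to condition (ii) of Theorem~\ref{thm3.5} taken with exponent $q$; the chain (i) $\iff$ Theorem~\ref{thm3.5}(i) $\iff$ Theorem~\ref{thm3.5}(ii) $\iff$ (ii) then closes the argument, much as Corollary~\ref{cor3.4} is deduced from Theorem~\ref{thm3.1}.

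For the implication ``Theorem~\ref{thm3.5}(ii) $\Rightarrow$ (ii)'' I would apply Theorem~\ref{thm3.5}(ii) to the martingale of conditional expectations of $f$ relative to $wd\mu$: its maximal function is the $wd\mu$-Doob maximal function of $f$, and since $p>1$ Doob's maximal inequality in the $\sigma$-finite filtered measure space $(\Omega,\cF,wd\mu;(\cF_i)_{i\in\Z})$ bounds its $L^p(wd\mu)$ norm by $p'\|f\|_{L^p(wd\mu)}$, giving $C_2\le p'C_q$. For the converse, given an arbitrary nonnegative $\cF_i$-measurable sequence $(f_i)$ with $f^{*}:=\sup_i f_i$, the key remark is that each $f_i$, being $\cF_i$-measurable and $\le f^{*}$, satisfies $f_i=\cE_if_i\le\cE_if^{*}$ pointwise (conditional expectation relative to $wd\mu$ is order-preserving), so that feeding the single function $f^{*}$ into condition (ii) already yields $\l(\sum_{i\in\Z}\int_{\Omega}w_if_i^q\,d\mu\r)^{1/q}\le C_2\|f^{*}\|_{L^p(wd\mu)}$, i.e.\ $C_q\le C_2$ (and if $f^{*}\notin L^p(wd\mu)$ there is nothing to prove). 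Chasing the constants through Theorem~\ref{thm3.5} then gives that the least admissible $C_1$ and $C_2$ are comparable.

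The deduction is short, the real work having been done in Theorem~\ref{thm3.5}; the points that need a little care are that $(\Omega,\cF,wd\mu;(\cF_i)_{i\in\Z})$ really is a $\sigma$-finite filtered measure space --- this uses $w\in\cL^{+}$ together with $\sigma$-finiteness of $\mu$, and is what legitimises both the $wd\mu$-conditional expectations entering condition (ii) and the appeal to Doob's inequality --- the bookkeeping of the exponents $p,q,r,\theta,\theta'$, and the order-preserving estimate $f_i\le\cE_if^{*}$, which is precisely what lets one bypass the arbitrary test sequences of the proof of Theorem~\ref{thm3.5} and instead test (ii) against a single function.
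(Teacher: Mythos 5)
Your proof is correct and follows the paper's own (very terse) route: the paper's entire proof of this corollary is to apply Theorem~\ref{thm3.5} with $\theta=p/q$, observe that $\frac1{(p/q)'}\frac1q=\frac1r$ (so $q\theta'=r$), and invoke Doob's maximal inequality --- exactly the reduction you carry out. Your reading of $\cE_if$ as the conditional expectation relative to $wd\mu$ is the correct (and necessary) interpretation, since it is what makes Doob's inequality applicable on $(\Omega,\cF,wd\mu;(\cF_i)_{i\in\Z})$ and matches how the corollary is invoked in the proof of Theorem~\ref{thm1.2}; your observation $f_i\le\cE^w_if^{*}$ for the converse direction is a clean way to close the equivalence that the paper leaves implicit.
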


\begin{proof}
It suffices to notice Doob's maximal inequality 
and fact that 
$$
\frac1{(p/q)'}\frac1q
=
\l(1-\frac{q}{p}\r)\frac1q
=
\frac{p-q}{pq}
=\frac1r.
$$
\end{proof}

\section{Two-weight norm inequalities for generalized Doob's maximal operator}\label{sec4}
In this section, 
by the use of the Carleson embedding theorem, 
we give a simple proof of the analogue of Sawyer's theorem \cite{Sa1} 
characterizing the weights governing the 
two-weight strong-type norm inequality 
for generalized Doob's maximal operator $M_{\al}$. 
The following is the analogue of Sawyer's theorem in a martingale setting. 

\begin{theorem}\label{thm4.1}
Let $1<p\le q<\infty$, 
$\al_i$, $i\in\Z$, be a nonnegative bounded $\cF_i$-measurable function and 
$u,v\in\cL^{+}$ be a weight. Then 
the following statements are equivalent:

\begin{enumerate}
\item[{\rm(a)}] 
There exists a constant $C_1>0$ such that 
$$
\|M_{\al}f\|_{L^q(ud\mu)}
\le C_1
\|f\|_{L^p(vd\mu)};
$$
\item[{\rm(b)}] 
If $\sg=v^{1-p'}\in\cL^{+}$, then 
there exists a constant $C_2>0$ such that 
$$
\l(\int_{E}\l(\sup_{j\ge i}\al_j\cE_j\sg\r)^qu\,d\mu\r)^{\frac1q}
\le C_2
[\sg d\mu](E)^{\frac1p}
$$
for any $E\in\cF_i^0$, $i\in\Z.$
\end{enumerate}
Moreover,
the least possible $C_1$ and $C_2$ are equivalent.
\end{theorem}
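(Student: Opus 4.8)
The plan is to prove Theorem~\ref{thm4.1} by reducing it to the Carleson embedding theorem, following the same strategy that was used for Theorem~\ref{thm1.1}, but now with a maximal operator rather than a positive operator. Since $(a)\Rightarrow(b)$ is the easy direction, I would handle it first: given the two-weight inequality, test it on the functions $f=1_E\sg$ for $E\in\cF_i^0$. For such $f$ and for any $j\ge i$ we have on $E$ that $\cE_j(1_E\sg)=\cE_j\sg$ up to the restriction, so $M_\al(1_E\sg)\ge \al_j\cE_j\sg$ on $E$ for every $j\ge i$, hence $M_\al(1_E\sg)\ge \sup_{j\ge i}\al_j\cE_j\sg$ pointwise on $E$. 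Plugging this into $(a)$ and computing $\|1_E\sg\|_{L^p(vd\mu)}^p=\int_E\sg^pv\,d\mu=\int_E\sg\,d\mu=[\sg d\mu](E)$ (using $\sg^pv=\sg^{p(1-p')}v=\sg$, since $p(1-p')=-p'\cdot(p-1)/(p-1)\cdot\ldots$; more directly $\sg v^{1/(p-1)}\cdot v=\sg$ because $v^{1-p'}\cdot v = v^{2-p'}$—I should double-check this elementary identity, but it is the standard $A_p$-duality computation) yields $(b)$ with $C_2\le C_1$.

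For the main direction $(b)\Rightarrow(a)$, the strategy is linearization via stopping times plus Carleson embedding. By duality and by replacing $f$ with $f\sg$, it suffices to bound $\int_\Omega (M_\al(f\sg))^q\,u\,d\mu$ in terms of $\|f\|_{L^p(\sg d\mu)}$. For each level, or more cleanly for each point, let $\tau$ be the stopping time realizing the supremum (or a measurable selection thereof); decomposing $\Omega$ according to the value of $\tau$ produces a family of disjoint sets $\{\tau=i\}\in\cF_i$ on which $M_\al(f\sg)=\al_i\cE_i(f\sg)$. Then
$$
\int_\Omega (M_\al(f\sg))^q u\,d\mu
=\sum_i\int_{\{\tau=i\}}\l(\al_i\cE_i(f\sg)\r)^q u\,d\mu
\le\sum_i\int_{\Omega}\l(\al_i\cE_i^\sg f\r)^q(\cE_i\sg)^q\,1_{\{\tau=i\}}u\,d\mu,
$$
where I have used $\cE_i(f\sg)=\cE_i^\sg f\cdot\cE_i\sg$ exactly as in the proof of Theorem~\ref{thm1.1}. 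Setting $d\nu_i:=1_{\{\tau=i\}}(\al_i\cE_i\sg)^q u\,d\mu$ and noting that $\al_i\cE_i^\sg f\le (f)^{*,\sg}$ (the Doob maximal function with respect to $\sg d\mu$), the right side is at most $\sum_i\int_\Omega ((f)^{*,\sg})^q\,d\nu_i$. An application of Corollary~\ref{cor3.4} (with $\theta=q/p\ge1$, base measure $\sg d\mu$, and $\|(f)^{*,\sg}\|_{L^p(\sg d\mu)}\lesssim\|f\|_{L^p(\sg d\mu)}$ by Doob) reduces everything to the Carleson condition
$$
\sum_{j\ge i}\nu_j(E)\le C\,[\sg d\mu](E)^{q/p}\quad\text{for all }E\in\cF_i^0,
$$
and since $\sum_{j\ge i}\nu_j(E)\le\int_E(\sup_{j\ge i}\al_j\cE_j\sg)^q u\,d\mu$ (the sets $\{\tau=j\}$ being disjoint), hypothesis $(b)$ gives exactly this. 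Tracking constants through Corollary~\ref{cor3.4} and Doob's inequality yields $C_1\le CC_2$.

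The main obstacle I anticipate is the linearization step: a priori the supremum defining $M_\al$ need not be attained (the index set is $\Z$, possibly with no maximizing $i$), so the stopping-time selection must be done carefully—either by first truncating to finitely many nonzero $\al_i$ and passing to a limit by monotone convergence, or by working level-by-level as in the proof of Theorem~\ref{thm3.1} with $F=\{M_\al(f\sg)>\la\}$ and the stopping time $\tau_\la=\inf\{i:\al_i\cE_i(f\sg)>\la\}$. The level-set approach meshes especially well with the proof of Theorem~\ref{thm3.1}, so I would adopt it: on $\{\tau_\la=i\}$ one has $\al_i\cE_i(f\sg)>\la$, and these sets are disjoint in $i$, which is precisely what the Carleson argument needs. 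A secondary technical point is verifying the integrability hypotheses required to invoke Corollary~\ref{cor3.4}—namely that $\tfrac{w_i}{\cE_i w}$ (here with $w=\sg$ and $w_i=(\al_i\cE_i\sg)^q u\cdot(\text{indicator})$, suitably normalized) lies in $\cL^+$; this should follow from boundedness of $\al_i$ together with $\sg,u\in\cL^+$, but it must be checked. The rest is routine bookkeeping with Hölder, Jensen, and the tower rule, exactly as in Section~\ref{sec2}.
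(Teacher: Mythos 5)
Your proposal is correct and follows essentially the same route as the paper: $(a)\Rightarrow(b)$ by testing on $f=1_E\sg$, and $(b)\Rightarrow(a)$ by linearizing $M_\al$ with stopping times, factoring $\cE_i(f\sg)=(\cE_i\sg)\,\cE_i^{\sg}[f]$, and invoking Corollary~\ref{cor3.4} with $\theta=q/p$ over the base measure $\sg\,d\mu$, the disjointness of the stopping sets reducing the Carleson condition exactly to hypothesis $(b)$; the paper implements the linearization via the two-parameter dyadic decomposition $E_j^i=F_j\cap\{\tau_j=i\}$ with $\tau_j=\inf\{i:\al_i\cE_if>2^j\}$, which is precisely the ``level-by-level'' variant you single out as the safe option. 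The only blemishes are cosmetic: the factor $\al_i$ appears twice (once in $d\nu_i$ and once in ``$\al_i\cE_i^\sg f$''), and the appeal to ``duality'' is unnecessary --- the substitution $f\mapsto f\sg$ alone does the job.
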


\begin{proof}
We follow the argument in \cite{Cr}. 

The proof of 
(a)~$\Rightarrow$~(b) 
follows at once if we substitute the test function 
$f=1_{E}\sg$. 
We shall prove converse. 

Without loss of generality we may assume that 
$f$ is a nonnegative function. 
For $j\in\Z$ define a stopping time 
$$
\tau_j:=\inf\{i:\,\al_i\cE_if>2^j\}.
$$
Clearly, $\tau_j\le\tau_{j+1}$. 
If we let 
$$
F_j
:=
\{-\infty<\tau_j<\infty\}
\setminus
\{-\infty<\tau_{j+1}<\infty\},
$$
then we see that 
$F_j$'s are disjoint and 
$$
\{M_{\al}f>0\}
=
\bigcup_jF_j.
$$
We now set 
$$
E_j^i
:=
F_j\cap\{\tau_j=i\}.
$$
It follows that 
$E_j^i$'s are disjoint, 
$F_j=\ds\bigcup_iE_j^i$ 
and, if $E_j^i\ne\emptyset$, then 
$$
M_{\al}f
\approx
\al_i\cE_if
\text{ on }
E_j^i.
$$
We now estimate as follows:
\begin{alignat*}{2}
\int_{\Omega}(M_{\al}f)^qu\,d\mu
&=
\sum_{i,j}
\int_{E_j^i}(M_{\al}f)^qu\,d\mu
\le C
\sum_{i,j}
\int_{E_j^i}(\al_i\cE_if)^qu\,d\mu
\\ =C
\sum_i
\int_{\Omega}
\l(\sum_j1_{E_j^i}(\al_i\cE_i\sg)^q\r)
\l(\frac{\cE_if}{\cE_i\sg}\r)^q
u\,d\mu.
\end{alignat*}
Since 
$\ds
\frac{\cE_if}{\cE_i\sg}
=
\cE^{\sg}_i\l[\frac{f}{\sg}\r]
$,
we shall evaluate 
$$
\sum_i
\int_{\Omega}
\l(\sum_j1_{E_j^i}(\al_i\cE_i\sg)^q\r)
\l(\cE^{\sg}_i\l[\frac{f}{\sg}\r]\r)^q
u\,d\mu.
$$
Applying the Carleson embedding theorem 
(Corollary \ref{cor3.4}), 
we need only verify that 
there exists a constant $C>0$ such that 
$$
\sum_{j\ge i}
\int_{E}
\l(\sum_k1_{E_k^j}(\al_j\cE_j\sg)^q\r)
u\,d\mu
\le C
[\sg d\mu](E)^{\frac{q}{p}}
$$
holds for any $E\in\cF_i$, $i\in\Z.$
The fact that $E_k^j$'s are disjoint 
and the assertion (b) yield 
$$
\int_{E}
\sum_{j\ge i}\sum_k
1_{E_k^j}(\al_j\cE_j\sg)^qu\,d\mu
\le
\int_{E}\l(\sup_{j\ge i}\al_j\cE_j\sg\r)^qu\,d\mu
\le C
[\sg d\mu](E)^{\frac{q}{p}}.
$$
This completes the proof. 
\end{proof}

The following lemma was proved in \cite[Theorem 1]{Ch}. 
For the sake of the completeness 
the full proof is given here.

\begin{lemma}\label{lem4.2}
Let $1<p<\infty$, 
$w\in\cL^{+}$ be a weight and 
$\sg=w^{1-p'}\in\cL^{+}$. Then 
the following statements are equivalent:

\begin{enumerate}
\item[{\rm(a)}] 
There exists a constant $C_1>0$ such that 
$$
\sup_{i\in\Z}
\|(\cE_iw)(\cE_i\sg)^{p-1}\|_{L^{\infty}(d\mu)}<C_1;
$$
\item[{\rm(b)}] 
There exists a constant $C_2>0$ such that 
$$
\int_{E}\l(\sup_{j\ge i}\cE_j\sg\r)^pw\,d\mu
\le C_2^p
[\sg d\mu](E)
$$
for any $E\in\cF_i^0$, $i\in\Z.$ 
\end{enumerate}
Moreover,
the least possible $C_1$ and $C_2$ enjoy 
$$
C_1\le C_2^p,
\qquad
C_2\le CC_1^{\frac1{p-1}}.
$$
\end{lemma}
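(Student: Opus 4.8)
The plan is to prove the two implications separately; with $\sg=w^{1-p'}$, condition (a) is the martingale $A_p$ condition for $w$ and (b) is a Sawyer-type testing condition for Doob's maximal operator applied to $\sg$ with weight $w$, so the statement is the martingale analogue of ``$A_p\iff$ testing'', and neither direction will need the Carleson embedding theorem.

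For (b)$\,\Rightarrow\,$(a): since $\sup_{j\ge i}\cE_j\sg\ge\cE_i\sg$ and every $E\in\cF_i^0$ is $\cF_i$-measurable, testing (b) on such an $E$ gives, by property (i) of conditional expectations and $[\sg d\mu](E)=\int_E\cE_i\sg\,d\mu$,
$$
\int_E(\cE_i\sg)^p\cE_iw\,d\mu=\int_E(\cE_i\sg)^pw\,d\mu\le C_2^p[\sg d\mu](E)=C_2^p\int_E\cE_i\sg\,d\mu .
$$
Thus $\int_E\l((\cE_iw)(\cE_i\sg)^{p-1}-C_2^p\r)\cE_i\sg\,d\mu\le0$ for all $E\in\cF_i^0$, and choosing $E$ to lie inside $\{(\cE_iw)(\cE_i\sg)^{p-1}>C_2^p\}\cap\{\cE_i\sg>0\}$ with finite $\mu$-measure (possible by $\sg$-finiteness of $(\Omega,\cF_i,\mu)$) forces that set to be $\mu$-null; hence $(\cE_iw)(\cE_i\sg)^{p-1}\le C_2^p$ a.e. for each $i$, i.e. (a) holds with $C_1\le C_2^p$. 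All integrals involved are finite by (b), so the subtraction is licit.

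For (a)$\,\Rightarrow\,$(b): fix $E\in\cF_i^0$ and set $M^*\sg:=\sup_{j\ge i}\cE_j\sg$. I would first replace $M^*\sg$ by the truncation $M^N\sg:=\max_{i\le j\le N}\cE_j\sg$. Since $(M^N\sg)^p\le\sum_{j=i}^N(\cE_j\sg)^p$ and, by (a) together with $E\in\cF_i\subset\cF_j$, $\int_E(\cE_j\sg)^pw\,d\mu=\int_E(\cE_j\sg)^p\cE_jw\,d\mu\le C_1\int_E\cE_j\sg\,d\mu=C_1[\sg d\mu](E)$, the quantity $\int_E(M^N\sg)^pw\,d\mu$ is finite --- this is the a priori bound we divide by below. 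Now for $\la>0$ let $\tau_\la:=\inf\{j:i\le j\le N,\ \cE_j\sg>\la\}$, a stopping time, so that $\{M^N\sg>\la\}\cap E=\bigsqcup_j E_j^\la$ with $E_j^\la:=\{\tau_\la=j\}\cap E\in\cF_j$ and $\cE_j\sg>\la$ on $E_j^\la$. By (a) and $1-p<0$,
$$
\int_{E_j^\la}w\,d\mu=\int_{E_j^\la}\cE_jw\,d\mu\le C_1\int_{E_j^\la}(\cE_j\sg)^{1-p}\,d\mu\le C_1\la^{1-p}\mu(E_j^\la);
$$
summing over $j$ gives the weak-type estimate $\int_{\{M^N\sg>\la\}\cap E}w\,d\mu\le C_1\la^{1-p}\mu(\{M^N\sg>\la\}\cap E)$. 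Multiplying by $p\la^{p-1}$ and integrating over $\la\in(0,\infty)$ (layer-cake: the left side becomes $\int_E(M^N\sg)^pw\,d\mu$, the right side $pC_1\int_E M^N\sg\,d\mu$) yields $\int_E(M^N\sg)^pw\,d\mu\le pC_1\int_E M^N\sg\,d\mu$.

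It remains to close the argument by Hölder's inequality with exponents $p,p'$ and the elementary identity $w^{-p'/p}=w^{-1/(p-1)}=\sg$:
$$
\int_E M^N\sg\,d\mu\le\l(\int_E(M^N\sg)^pw\,d\mu\r)^{\frac1p}\l(\int_E w^{-p'/p}\,d\mu\r)^{\frac1{p'}}=\l(\int_E(M^N\sg)^pw\,d\mu\r)^{\frac1p}[\sg d\mu](E)^{\frac1{p'}}.
$$
Combining with the previous inequality and dividing by $\l(\int_E(M^N\sg)^pw\,d\mu\r)^{1/p}$ --- legitimate precisely because that quantity is finite --- gives $\int_E(M^N\sg)^pw\,d\mu\le(pC_1)^{p'}[\sg d\mu](E)$; letting $N\to\infty$ and using monotone convergence gives (b) with $C_2\le(pC_1)^{p'/p}=CC_1^{1/(p-1)}$, $C$ depending only on $p$. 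The only genuinely delicate point is this self-improving ``divide out'' step, which is the reason for passing to the truncations $M^N\sg$; everything else is a routine combination of the testing hypothesis (b), Doob-type stopping times in the filtered measure space, and the duality $w^{1-p'}=\sg$.
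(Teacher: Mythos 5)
Your proof is correct, and the quantitative bounds $C_1\le C_2^p$ and $C_2\le CC_1^{1/(p-1)}$ come out as stated. The direction (b)$\,\Rightarrow\,$(a) is essentially identical to the paper's: test on $E\in\cF_i^0$, use selfadjointness to replace $w$ by $\cE_iw$, and conclude the pointwise bound from the arbitrariness of $E$. The interesting difference is in (a)$\,\Rightarrow\,$(b). The paper argues by a change of measure: from (a) it deduces $(\cE_i\sg)^p\le C_1^{p'}\bigl(\cE_i^w[w^{-1}]\bigr)^{p'}$, where $\cE_i^w$ denotes conditional expectation with respect to $w\,d\mu$, and then applies Doob's maximal inequality in $L^{p'}$ of the \emph{weighted} filtered measure space $(\Omega,\cF,w\,d\mu)$ to the function $1_Ew^{-1}$, landing directly on $CC_1^{p'}[\sg d\mu](E)$ in three lines. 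You instead run the classical Muckenhoupt--Sawyer distributional argument: truncate the maximal function, derive a weak-type estimate $[wd\mu](\{M^N\sg>\la\}\cap E)\le C_1\la^{1-p}\mu(\{M^N\sg>\la\}\cap E)$ from the stopping-time decomposition and the pointwise $A_p$ bound, integrate by the layer-cake formula, and close with H\"older and the divide-out step (correctly justified by your a priori finiteness bound $\int_E(M^N\sg)^pw\,d\mu\le(N-i+1)C_1[\sg d\mu](E)$). Your route is longer but more elementary: it uses only the unweighted structure and never invokes Doob's inequality for the measure $w\,d\mu$, whereas the paper's proof is shorter but leans on the fact that $(\Omega,\cF,w\,d\mu;(\cF_i))$ is again a $\sg$-finite filtered measure space in which Doob's $L^{p'}$ inequality is available. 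Both yield the same dependence $C_2\le CC_1^{1/(p-1)}$ with $C=C(p)$.
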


\begin{proof}
Proof of (b)~$\Rightarrow$~(a). 
It follows that, 
for any $E\in\cF_i^0$, $i\in\Z$, 
\begin{alignat*}{2}
\int_{E}(\cE_iw)(\cE_i\sg)^p\,d\mu
&=
\int_{E}\cE[(\cE_i\sg)^pw]\,d\mu
=
\int_{E}(\cE_i\sg)^pw\,d\mu
\le
\int_{E}
\l(\sup_{j\ge i}\cE_j\sg\r)^p
w\,d\mu
\\ &\le C_2^p
\int_{E}\sg\,d\mu
=C_2^p
\int_{E}\cE_i\sg\,d\mu.
\end{alignat*}
This implies 
$$
(\cE_iw)(\cE_i\sg)^p
\le C_2^p
\cE_i\sg
$$
and, hence, yields (a) with 
$C_1\le C_2^p$. 

We now verify converse 
(a)~$\Rightarrow$~(b). 
By the assertion (a) we have 
$$
(\cE_i\sg)^p
\le C_1^{p'}
(\cE_iw)^{-p'}
=C_1^{p'}
\l(\cE^w_i[w^{-1}]\r)^{p'}.
$$
This yields, 
for any $E\in\cF_i^0$, $i\in\Z$, 
\begin{alignat*}{2}
\int_{E}\l(\sup_{j\ge i}\cE_j\sg\r)^pw\,d\mu
&\le C_1^{p'}
\int_{E}1_{E}\l(\sup_{j\ge i}\cE^w_j[w^{-1}]\r)^{p'}w\,d\mu
=
\int_{E}\l(\sup_{j\ge i}\cE^w_j[1_{E}w^{-1}]\r)^{p'}w\,d\mu
\\ &\le CC_1^{p'}
\int_{E}w^{1-p'}\,d\mu,
\end{alignat*}
where we have used Doob's maximal inequality.
Thus, we obtain 
$$
\int_{E}\l(\sup_{j\ge i}\cE_j\sg\r)^pw\,d\mu
\le
CC_1^{p'}[\sg d\mu](E)
$$
and have (b) with 
$\ds C_2=C_1^{\frac1{p-1}}$. 
This proves the theorem. 
\end{proof}

\begin{corollary}\label{cor4.3}
Let $1<p<\infty$, 
$\al_i$, $i\in\Z$, be a nonnegative bounded $\cF_i$-measurable function, 
$u,v\in\cL^{+}$ be a weight and 
$\sg=v^{1-p'}\in\cL^{+}$. Then, 
two-weight norm inequality 
$$
\|M_{\al}f\|_{L^p(ud\mu)}
\le C_1
\|f\|_{L^p(vd\mu)}
$$
holds \lq\lq if and only if" 
there exists a constant $C_2>0$ such that 
$$
\cE_i\l[\l(\sup_{j\ge i}\al_i\cE_j\sg\r)^pu\r]
\le
C_2^p\cE_i\sg.
$$
for any $i\in\Z.$
Moreover, 
the least possible $C_1$ and $C_2$ are equivalent.
\end{corollary}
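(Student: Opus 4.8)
The plan is to read this corollary off from Theorem~\ref{thm4.1} in the special case $q=p$, once the integrated testing condition there is rephrased as the pointwise one stated here. First I would apply Theorem~\ref{thm4.1} with $q=p$ (legitimate, since $1<p\le q<\infty$): the two-weight inequality $\|M_{\al}f\|_{L^p(ud\mu)}\le C_1\|f\|_{L^p(vd\mu)}$ holds if and only if there is a constant $C_2>0$ with
$$
\int_{E}\Phi_i\,d\mu\le C_2^p\,[\sg d\mu](E)\qquad\text{for every }E\in\cF_i^0,\ i\in\Z,
$$
where $\Phi_i:=\l(\sup_{j\ge i}\al_j\cE_j\sg\r)^pu$ is the nonnegative $\cF$-measurable integrand occurring in Theorem~\ref{thm4.1}(b), and moreover the least admissible $C_1$ and $C_2$ are comparable.

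Next I would convert this into a statement about conditional expectations. Since $\Phi_i\ge 0$, its conditional expectation $\cE_i\Phi_i$ makes sense as a $[0,\infty]$-valued $\cF_i$-measurable function (a monotone limit of $\cE_i$ applied to integrable truncations of $\Phi_i$), and monotone convergence gives $\int_{E}\Phi_i\,d\mu=\int_{E}\cE_i\Phi_i\,d\mu$ for all $E\in\cF_i$; similarly $\int_{E}\sg\,d\mu=\int_{E}\cE_i\sg\,d\mu$ because $\sg\in\cL^{+}$. Hence the testing condition above is equivalent to
$$
\int_{E}\l(\cE_i\Phi_i-C_2^p\,\cE_i\sg\r)d\mu\le 0\qquad\text{for every }E\in\cF_i^0,\ i\in\Z.
$$

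Finally I would invoke the $\sg$-finiteness of $(\Omega,\cF_i,\mu)$ to pass from this integral inequality to the pointwise one: testing it on $E=\{\cE_i\Phi_i-C_2^p\,\cE_i\sg>1/n\}\cap E_k$, where $(E_k)_k\subset\cF_i^0$ satisfies $\bigcup_kE_k=\Omega$, forces $\mu(E)=0$, and letting $n\to\infty$ yields $\cE_i\Phi_i\le C_2^p\,\cE_i\sg$ $\mu$-a.e., which is exactly the pointwise condition of the corollary; the reverse implication is immediate upon integrating over $E\in\cF_i^0$. Since this equivalence preserves $C_2$ exactly, chaining it with Theorem~\ref{thm4.1} gives both the claimed equivalence and the comparability of the least $C_1$ and $C_2$. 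I anticipate no genuine obstacle here; the only point needing a little care is that $\Phi_i$ need not lie in $\cL$, so $\cE_i\Phi_i$ must be handled as an extended-real-valued conditional expectation — which is harmless because $\Phi_i$ is nonnegative.
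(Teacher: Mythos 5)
Your proposal is correct and is exactly the intended derivation: the paper states Corollary \ref{cor4.3} without proof precisely because it is the routine specialization of Theorem \ref{thm4.1} to $q=p$ followed by the standard equivalence, via $\sg$-finiteness, between the integral testing condition over all $E\in\cF_i^0$ and the pointwise inequality between conditional expectations. Your handling of $\cE_i\Phi_i$ as an extended-real-valued conditional expectation of a nonnegative function is the right way to justify the only delicate step (and you correctly read the paper's $\al_i$ inside the supremum as a typo for $\al_j$).
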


\begin{remark}\label{rem4.4}
Long and Peng \cite{LoPe} 
showed that Corollary \ref{cor4.3} holds 
for Doob's maximal operator in a filtered probability space. 
(See also a recent work by Chen and Liu \cite{ChLi}.)
\end{remark}

\begin{corollary}\label{cor4.5}
Let $1<p<\infty$, 
$w\in\cL^{+}$ be a weight and 
$\sg=w^{1-p'}\in\cL^{+}$. Then, 
one-weight norm inequality 
$$
\|f^{*}\|_{L^p(wd\mu)}
\le C_1
\|f\|_{L^p(wd\mu)}
$$
holds \lq\lq if and only if" 
$$
\sup_{i\in\Z}
\|(\cE_iw)(\cE_i\sg)^{p-1}\|_{L^{\infty}(d\mu)}<C_2<\infty
$$
for any $i\in\Z.$ 
Moreover, 
the least possible $C_1$ and $C_2$ enjoy 
$$
C_2\le C_1^p,
\qquad
C_1\le CC_2^{\frac1{p-1}}.
$$
\end{corollary}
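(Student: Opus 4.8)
The plan is to deduce Corollary \ref{cor4.5} as the special case of Corollary \ref{cor4.3} obtained by taking $\al_i = 1_{\Omega}$ for all $i$ and $u = v = w$; then $M_{\al}f = f^{*}$, the hypothesis $\sg = v^{1-p'} = w^{1-p'} \in \cL^{+}$ is exactly the standing assumption, and the two-weight inequality of Corollary \ref{cor4.3} becomes precisely the one-weight inequality $\|f^{*}\|_{L^p(wd\mu)} \le C_1 \|f\|_{L^p(wd\mu)}$. So the only work is to rewrite the testing condition of Corollary \ref{cor4.3} in this case and match it, including constants, with the stated $A_p$-type condition.

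First I would substitute $\al_j = 1_{\Omega}$ and $u = w$ into the condition of Corollary \ref{cor4.3}, which becomes $\cE_i[(\sup_{j \ge i}\cE_j\sg)^p w] \le C_2^p\,\cE_i\sg$ for all $i \in \Z$. Integrating this over an arbitrary $E \in \cF_i^0$ and using that $\int_E \cE_i[\cdot]\,d\mu = \int_E (\cdot)\,d\mu$ shows that this pointwise condition is equivalent (with the same constant) to the integral condition $\int_E (\sup_{j \ge i}\cE_j\sg)^p w\,d\mu \le C_2^p\,[\sg d\mu](E)$ for all $E \in \cF_i^0$, $i \in \Z$ — that is, exactly condition (b) of Lemma \ref{lem4.2}. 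Then I would invoke Lemma \ref{lem4.2}, which states that this integral (Carleson) condition is equivalent to $\sup_{i \in \Z}\|(\cE_iw)(\cE_i\sg)^{p-1}\|_{L^{\infty}(d\mu)} < \infty$, and records the two-sided constant comparison $C_1 \le C_2^p$ and $C_2 \le C C_1^{1/(p-1)}$ between the constant in (a) and the one in (b) of that lemma.

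Stringing these equivalences together yields the corollary: the one-weight bound for $f^{*}$ holds if and only if the $\sup$-condition holds, with the least admissible constants related by $C_2 \le C_1^p$ and $C_1 \le C C_2^{1/(p-1)}$, matching the constant bookkeeping of Corollary \ref{cor4.3} (whose constant matches Corollary \ref{cor4.5}'s $C_1$) composed with that of Lemma \ref{lem4.2} (whose constant matches Corollary \ref{cor4.5}'s $C_2$). No genuine obstacle arises here: everything is a specialization of the previously proved Theorem \ref{thm4.1}/Corollary \ref{cor4.3} together with Lemma \ref{lem4.2}. The only point requiring a line of care is confirming that the pointwise testing condition and the integrated Carleson-type condition are literally equivalent with identical constants (so that the constant chain does not degrade), which follows immediately from the defining property of conditional expectation applied to sets $E \in \cF_i^0$ and from $\sg$-finiteness.
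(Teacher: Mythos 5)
Your proposal is correct and follows essentially the same route the paper intends: Corollary \ref{cor4.5} is obtained by specializing Theorem \ref{thm4.1} (equivalently Corollary \ref{cor4.3}) to $\al_i=1_{\Omega}$, $u=v=w$, and then translating the resulting Sawyer-type testing condition into the $A_p$ condition via Lemma \ref{lem4.2}, whose constant bounds $C_1\le C_2^p$ and $C_2\le CC_1^{1/(p-1)}$ account exactly for the exponents stated in the corollary. The one detail worth noting is that the sharp form $C_2\le C_1^p$ (with no extra multiplicative constant) uses that the easy direction of Theorem \ref{thm4.1}, obtained by testing on $f=1_E\sg$, loses no constant; your bookkeeping is consistent with this.
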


\begin{remark}\label{rem4.6}
For the $A_p$ weights with some regularity condition, 
Izumisawa and Kazamaki \cite{IzKa} 
proved first that Corollary \ref{cor4.5} holds in a filtered probability space.
Jawerth \cite{Ja} found that the added property is superfluous.
\end{remark}

\section{One-weight norm estimates of Hyt\"{o}nen-P\'{e}rez type for Doob's maximal operator}\label{sec5}
In this section, 
by an application of Theorem \ref{thm4.1},
we will sharpen Corollary \ref{cor4.5} 
following the argument due to Hyt\"{o}nen and P\'{e}rez (see \cite{Hy4,HyPe}). 

Let $1<p<\infty$, 
$w\in\cL^{+}$ be a weight and 
$\sg:=w^{1-p'}\in\cL^{+}$. 
We define 
$$
[w]_{A_p}
:=
\sup_{i\in\Z}
\|(\cE_iw)(\cE_i\sg)^{p-1}\|_{L^{\infty}(d\mu)}
$$
and define 
$$
[w]_{A_{\infty}}
:=
\sup_{i\in\Z}
\l\|(\cE_iw)\exp\l(-\cE_i(\log w)\r)\r\|_{L^{\infty}(d\mu)}.
$$
Then, one sees that 
$[w]_{A_{\infty}}\le[w]_{A_p}$ 
for $1<p<\infty$ and, 
using the dominated convergence theorem for conditional expectations, 
one sees also that 
$(\cE_i\sg)^{p-1}$ 
converges a.~e. to 
$\ds\exp\l(-\cE_i(\log w)\r)$. 

Corollary \ref{cor4.5} assert that 
there exists a constant $C_p>0$ such that 
$$
\|(\cdot)^{*}\|_{L^p(wd\mu)\rightarrow L^p(wd\mu)}
\le C_p
[w]_{A_p}^{\frac1{p-1}},
$$
where $C_p$ depends on $p$ but not on $w$.
Since 
$\ds[w]_{A_p}=[\sg]_{A_{p'}}^{p-1}$,
we have 
\begin{equation}\label{5.1}
\|(\cdot)^{*}\|_{L^p(wd\mu)\rightarrow L^p(wd\mu)}
\le C_p
\l([w]_{A_p}[\sg]_{A_{p'}}\r)^{\frac1p}.
\end{equation}
The following theorem sharpens \eqref{5.1}. 

\begin{theorem}\label{thm5.1}
$$
\|(\cdot)^{*}\|_{L^p(wd\mu)\rightarrow L^p(wd\mu)}
\le C_p
\l([w]_{A_p}[\sg]_{A_{\infty}}\r)^{\frac1p},
$$
where $C_p$ depends on $p$ but not on $w$.
\end{theorem}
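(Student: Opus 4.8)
The plan is to deduce Theorem~\ref{thm5.1} from the two-weight characterization in Theorem~\ref{thm4.1}. Apply that theorem with $\al\equiv(1_{\Omega})$, so that $M_{\al}f=f^{*}$, with $u=v=w$ and with $q=p$, so that $\sg=v^{1-p'}=w^{1-p'}\in\cL^{+}$ as required there. The equivalence (a)$\Leftrightarrow$(b), together with the comparability of the least admissible constants, shows that it suffices to produce a constant $C_p$, depending only on $p$, for which the testing inequality
\[
\int_{E}\bigl(\sup_{j\ge i}\cE_j\sg\bigr)^{p}w\,d\mu
\le C_p\,[w]_{A_p}\,[\sg]_{A_{\infty}}\,[\sg d\mu](E)
\]
holds for every $E\in\cF_i^{0}$ and $i\in\Z$; through (b)$\Rightarrow$(a) this bound on the testing constant transfers to the asserted bound for $\|(\cdot)^{*}\|_{L^p(wd\mu)\to L^p(wd\mu)}$. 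As in the other proofs of the paper, one reduces first by a limiting argument to the case of finitely many levels with $w$ bounded and summable. Thus the task is exactly to sharpen the implication (a)$\Rightarrow$(b) of Lemma~\ref{lem4.2}, replacing the factor $[w]_{A_p}^{1/(p-1)}=([w]_{A_p}[\sg]_{A_{p'}})^{1/p}$ by $([w]_{A_p}[\sg]_{A_{\infty}})^{1/p}$.

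Fix $E\in\cF_i^{0}$ and put $g:=\sg 1_{E}$; since $1_{E}$ is $\cF_j$-measurable for $j\ge i$, one has $\cE_j\sg=\cE_j g$ on $E$ for all such $j$, so the left-hand side above equals $\int_{E}N^{p}w\,d\mu$ with $N:=\sup_{j\ge i}\cE_j g$. I would then carry out a principal-set (stopping-time) decomposition of $g$ rooted at $E$: take $P_{0}:=E$ with base level $i$, and, given a stopping set $P\in\cF_{\ell(P)}$, declare its children to be the sets $\{\tau_{P}=m\}\cap P$, $m>\ell(P)$, where $\tau_{P}:=\inf\{m>\ell(P):\,\cE_m g>2\cE_{\ell(P)}g\}$. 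Writing $\cP$ for the resulting family and $E_{P}$ for the part of $P$ not covered by its children, the $E_{P}$'s partition $E$ up to a null set (by martingale convergence), and the geometric growth of $\cE g$ along successive stopping levels forces $N\le 2\cE_{\ell(P)}g$ on $E_{P}$. Hence $\int_{E}N^{p}w\,d\mu\le 2^{p}\sum_{P\in\cP}\int_{P}\bigl(\cE_{\ell(P)}g\bigr)^{p}w\,d\mu$. On each $P$, the pointwise $A_p$ bound $(\cE_{\ell(P)}w)(\cE_{\ell(P)}\sg)^{p-1}\le[w]_{A_p}$ together with $g\le\sg$ gives $(\cE_{\ell(P)}g)^{p}\le[w]_{A_p}\,\cE_{\ell(P)}g/\cE_{\ell(P)}w$, and since $P\in\cF_{\ell(P)}$ the selfadjointness of conditional expectation yields $\int_{P}(\cE_{\ell(P)}g)^{p}w\,d\mu\le[w]_{A_p}\int_{P}g\,d\mu=[w]_{A_p}[\sg d\mu](P)$. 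The problem therefore collapses to the Carleson packing estimate
\[
\sum_{P\in\cP}[\sg d\mu](P)\le C\,[\sg]_{A_{\infty}}\,[\sg d\mu](E).
\]

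This packing estimate is where the $A_{\infty}$- rather than the $A_{p'}$-character of $\sg$ must be used, and I expect it to be the main obstacle. In the dyadic case the family $\cP$ is $\tfrac12$-sparse for the underlying measure and the estimate follows immediately from the Fujii-Wilson form of the $A_{\infty}$ constant; but in a general filtered measure space the martingale $(\cE_j g)$ may have arbitrarily large jumps, so $E_{P}$ need not retain a fixed fraction of the $\mu$-mass of $P$, and that argument breaks down. Instead I would write $\sum_{P\in\cP}[\sg d\mu](P)=\int_{E}\sg\,d_{E}\,d\mu$, where $d_{E}(x)$ is the depth of $x$ in the stopping tree; the twofold increase of $\cE g$ across consecutive stopping levels gives $d_{E}\le 1+\log_{2}^{+}\!\bigl(N/\cE_i\sg\bigr)$ on $E$, so it suffices to prove the $L\log L$-type bound $\int_{E}\sg\,\log_{2}^{+}(N/\cE_i\sg)\,d\mu\le C[\sg]_{A_{\infty}}[\sg d\mu](E)$. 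I plan to obtain this from the exponential definition of $[\sg]_{A_{\infty}}$ via a sharp John-Nirenberg / reverse H\"older inequality for filtered weights, of the form $\cE_j(\sg^{r})\le 2(\cE_j\sg)^{r}$ for all $j$ with $r-1$ comparable to $1/[\sg]_{A_{\infty}}$; establishing such a self-improvement with no regularity assumption on the filtration is the technical heart of the argument. Granting it, the displayed chain yields the testing inequality with $C_p$ of the form $C\cdot 2^{p}$, and hence $\|(\cdot)^{*}\|_{L^p(wd\mu)\to L^p(wd\mu)}\le C_p([w]_{A_p}[\sg]_{A_{\infty}})^{1/p}$.
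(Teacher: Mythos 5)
Your reduction to the testing condition via Theorem \ref{thm4.1}, the principal--set (stopping time) decomposition, and the use of the $A_p$ bound together with selfadjointness to arrive at the Carleson packing estimate $\sum_{P\in\cP}[\sg d\mu](P)\le C[\sg]_{A_{\infty}}[\sg d\mu](E)$ all match the paper's strategy. But the proof has a genuine gap exactly where you say it does: you reduce everything to a sharp reverse H\"older inequality $\cE_j(\sg^{r})\le 2(\cE_j\sg)^{r}$ with $r-1\approx 1/[\sg]_{A_{\infty}}$ and then write ``granting it.'' That self-improvement is not a routine technicality in a general filtered measure space: without regularity assumptions on the filtration, martingale $A_{\infty}$ weights are precisely the setting where reverse H\"older inequalities are known to be delicate and can fail, so the ``technical heart'' you defer is not something one can take for granted, and the argument as written does not close.

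Two further comments. First, your claim that the sparseness of the principal sets breaks down because the martingale $(\cE_j g)$ may have large jumps is not correct: the paper's property $\mu(P)\le 2\mu(E(P))$ follows from the weak $(1,1)$ inequality for Doob's maximal operator (which holds with constant $1$ in any $\sg$-finite filtered measure space) combined with choosing the stopping threshold to be twice the upper bound of the dyadic shell containing $\cE_{\kp_1(P)}[1_{P}\sg]$; overshoot at the stopping time affects the stopped value, not the measure estimate. Second, the paper avoids reverse H\"older altogether. It uses the exponential form of the constant to write $\cE_{\kp_1(P)}\sg\le[\sg]_{A_{\infty}}\exp\l(\cE_{\kp_1(P)}\log\sg\r)$ on $E(P)$, sums over the disjoint sets $E(P)$ to get $\int_{P_0}\sup_{j}\exp\l(\cE_j[\log 1_{P_0}\sg]\r)d\mu$, and then bounds this ``logarithmic maximal function'' on $L^1$ with constant $e$ by combining Jensen's inequality $\exp(\cE_j\log h)\le(\cE_j[h^{1/q}])^{q}$ with Doob's $L^{q}$ maximal inequality applied to $(1_{P_0}\sg)^{1/q}$ and letting $q\to\infty$. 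Replacing your reverse H\"older step with this device is what actually completes the proof.
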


\begin{proof}
Let $i\in\Z$ be arbitrarily chosen and fixed. 
By Theorem \ref{thm4.1}, 
we have to prove that , 
for any $E\in\cF_i^0$,
$$
\|1_{E}\sup_{j\ge i}\cE_j\sg\|_{L^p(wd\mu)}^p
\le C
[w]_{A_p}[\sg]_{A_{\infty}}
[\sg d\mu](E).
$$
Let us now apply the construction of principal set as follows. 

Since we have 
$$
\|1_{E}\sup_{j\ge i}\cE_j\sg\|_{L^p(wd\mu)}^p
=
\|1_{E}\sup_{j\ge i}\cE_j[1_{E}\sg]\|_{L^p(wd\mu)}^p,
$$
we may assume that 
$E=P_0$ satisfies 
$P_0\in\cF_i^0$, $\mu(P_0)>0$ 
and, for some $k\in\Z$, 
$$
2^{k-1} 1_{P_0}<\cE_i[1_{P_0}\sg]\le 2^k 1_{P_0}
$$
by a simple dyadic decomposition argument.
We write $\kp_1(P_0):=i$ and $\kp_2(P_0):=k$.
We let $\cP_1:=\{P_0\}$ which 
we call the first generation of principal sets. 
To get the second generation of principal sets we define a stopping time 
$$
\tau_{P_0}
:=
\inf\{j\ge i:\,\cE_j[1_{P_0}\sg]>2^{k+1} 1_{P_0}\}.
$$
We say that a set $P\subset P_0$ 
is a principal set with respect to $P_0$ 
if it satisfies $\mu(P)>0$ and 
there exist $j>i$ and $l>k+1$ such that 
$$
P
=
\{2^{l-1} 1_{P_0} <\cE_j[1_{\{\tau_{P_0}=j\}}\sg]\le 2^l 1_{P_0}\}.
$$
Noticing that such $j$ and $l$ are unique, 
we write $\kp_1(P):=j$ and $\kp_2(P):=l$.
We let $\cP(P_0)$ be the set of all principal sets with respect to $P_0$ 
and let $\cP_2:=\cP(P_0)$ which 
we call the second generation of principal sets. 

We now need to verify that 
\begin{equation}\label{5.2}
\mu(P_0)\le 2\mu(E(P_0))
\end{equation}
where
$$
E(P_0)
:=
P_0\cap\{\tau_{P_0}=\infty\}
=
P_0\setminus\bigcup_{P\in\cP(P_0)}P.
$$
Indeed, 
it follows from the use of weak-$(1,1)$ boundedness of Doob's maximal operator that 
$$
\mu(P_0\cap\{\tau_{P_0}<\infty\})
\le
2^{-k-1}\int_{P_0}\sg\,d\mu
=
2^{-k-1}\int_{P_0}\cE_i\sg\,d\mu
\le 2^{-1}\mu(P_0).
$$
This clearly implies \eqref{5.2}. 

The next generations are defined inductively,
$$
\cP_{n+1}
:=
\bigcup_{P\in\cP_n}\cP(P),
$$
and we define the collection of principal sets $\cP$ by 
$$
\cP:=\bigcup_{n=0}^{\infty}\cP_n.
$$
It is easy to see that 
the collection of principal sets $\cP$ satisfies the following properties:

\begin{enumerate}
\item[{\rm(i)}] 
The sets $E(P)$ where $P\in\cP$,
are disjoint and 
$P_0=\ds\bigcup_{P\in\cP}E(P)$;
\item[{\rm(ii)}] 
$P\in\cF_{\kp_1(P)}$;
\item[{\rm(iii)}] 
$\mu(P)\le 2\mu(E(P))$;
\item[{\rm(iv)}] 
$2^{\kp_2(P)-1} <\cE_{\kp_1(P)}\sg\le 2^{\kp_2(P)} $
on $P$;
\item[{\rm(v)}] 
$\ds\sup_{j\ge i}\cE_j[1_{P}\sg]
\le
2^{\kp_2(P)+1} $ on $E(P)$.
\end{enumerate}

We estimate as follows: 
\begin{alignat*}{2}
(*)
&:=
\|1_{P_0}\sup_{j\ge i}\cE_j[1_{P_0}\sg]\|_{L^p(wd\mu)}^p
\\ &=
\sum_{P\in\cP}
\|1_{E(P)}\sup_{j\ge i}\cE_j[1_{P_0}\sg]\|_{L^p(wd\mu)}^p
\\ &\le 2^p
\sum_{P\in\cP}
[wd\mu](E(P))2^{p\kp_2(P)}
\le 2^p2^{p-1}
\sum_{P\in\cP}
2^{\kp_2(P)}
[wd\mu](E(P))2^{(p-1)(\kp_2(P)-1)}
\\ &\le 2^p2^{p-1}
\sum_{P\in\cP}
2^{\kp_2(P)}
\int_{E(P)}\l(\cE_{\kp_1(P)}\sg\r)^{p-1}w\,d\mu
\le 2^p2^{p-1}
\sum_{P\in\cP}
2^{\kp_2(P)}
\int_{P}\l(\cE_{\kp_1(P)}\sg\r)^{p-1}w\,d\mu
\\ &= 2^p2^{p-1}
\sum_{P\in\cP}
2^{\kp_2(P)}
\int_{P}\l(\cE_{\kp_1(P)}\sg\r)^{p-1}\cE_{\kp_1(P)}w\,d\mu,
\end{alignat*}
where in the last two steps 
we have used $E(P)\subset P$ 
and (ii). 
The definition of $A_p$ and (iii) yield 
$$
(*)
\le 4^p[w]_{A_p}
\sum_{P\in\cP}
2^{\kp_2(P)}\mu(E(P)).
$$
Since the definition of $A_{\infty}$ and (iv) imply 
$$
2^{\kp_2(P)}
\le
2\cE_{\kp_1(P)}\sg
\le 2[\sg]_{A_{\infty}}
\exp\l(\cE_{\kp_1(P)}\log\sg\r)
\text{ on }E(P),
$$
we have further that 
\begin{alignat*}{2}
(*)
&\le
2\cdot 4^p[w]_{A_p}[\sg]_{A_{\infty}}
\sum_{P\in\cP}
\int_{E(P)}
\sup_{j\ge i}
\exp\l(\cE_j[\log1_{P_0}\sg]\r)
\,d\mu
\\ &=
2\cdot 4^p[w]_{A_p}[\sg]_{A_{\infty}}
\int_{P_0}
\sup_{j\ge i}
\exp\l(\cE_j[\log1_{P_0}\sg]\r)
\,d\mu.
\end{alignat*}
For any $q>1$ we have 
$$
\exp\l(\cE_j[\log1_{P_0}\sg]\r)
=
\l\{\exp\l(\cE_j[\log(1_{P_0}\sg)^{\frac1q}]\r)\r\}^q
\le
\l(\cE_j[(1_{P_0}\sg)^{\frac1q}]\r)^q
$$
by Jensen's inequality for conditional expectation. 
This yields 
$$
\sup_{j\ge i}
\exp\l(\cE_j[\log1_{P_0}\sg]\r)
\le
\l(
\sup_{j\ge i}
\cE_j[(1_{P_0}\sg)^{\frac1q}]
\r)^q.
$$
Finally, 
Doob's maximal inequality gives us that 
$$
(*)
\le
2\cdot 4^p[w]_{A_p}[\sg]_{A_{\infty}}
(q')^q
[\sg d\mu](P_0).
$$
Letting $q\rightarrow\infty$, we obtain 
$$
(*)
\le
2\cdot 4^p[w]_{A_p}[\sg]_{A_{\infty}}e
[\sg d\mu](P_0).
$$
This completes the proof. 
\end{proof}

\end{document}